\newtheorem{thm}{Theorem}[section]
\newtheorem*{thma}{Theorem A}
\newtheorem{corr}[thm]{Corollary}
\newtheorem{lem}[thm]{Lemma}
\newtheorem{prop}[thm]{Proposition}
\theoremstyle{definition}
\newtheorem{defn}{Definition}[section]
\theoremstyle{remark}
\newtheorem{rem}{Remark}[section]
\numberwithin{equation}{section}
\begin{document}
\title[The $L_p$ Polar bodies of shadow system and related inequalities]
{The $L_p$ Polar bodies of shadow system and related inequalities}

\author{Lujun Guo}

\author{Hanxiao Wang}

\address{College of Mathematics and Information Science,
Henan Normal University, Xinxiang 453007, P.R. China}

\email{lujunguo0301@163.com(L. Guo)}
\email{lujunguo@htu.edu.cn(L. Guo)}
\email{2069384924@qq.com(H. Wang) }
\thanks{The research of authors is supported by NSFC (No. 12126319).}

\begin{abstract}
The $L_p$ versions of the support function and polar body are introduced by Berndtsson, Mastrantonis and Rubinstein in \cite{Berndtsson-Mastrantonis-Rubinstein-2023} recently.
In this paper, we prove that the $L_p$-support function of the shadow system $K_t$ introduced by Rogers and Shephard in \cite{rogers-1958-02,shephard-1964} is convex and the volume of the section of $L_p$ polar bodies of $K_t$ is $\frac{1}{n}$-concave with respect to parameter $t$, and obtain some related inequalities. Finally, we present the reverse Rogers-Shephard  type inequality for $L_p$-polar bodies.
\end{abstract}

\subjclass[2010]{52A40; 52A20;46G12.}

\keywords{$p$-polar body; Mahler conjecture; Santal\'{o} inequality; Shadow system; Legendre transform.}

\maketitle

\section{Introduction}

A convex body is a subset $K\subset \mathbb{R}^n$ which is convex, compact, and has non-empty interior. The support function $h_K$ of a convex body $K$ is defined by
$$h_K(x)=\max\{\langle x,y\rangle:y\in K\},\ \ \ \ x\in\mathbb{R}^n$$
where $\langle \cdot, \cdot\rangle$ denotes the standard inner product. By $int(K)$ we denote the interior of $K$. If $K$ is a convex body in $\mathbb{R}^n$ and $z\in int(K)$, the polar body $K^z$ of $K$ with the center of polarity $z$ is defined by
\begin{align}\label{polar body of K}
K^z=\{y\in\mathbb{R}^n:\langle y-z,x-z\rangle \leq1 \ \ \ \text{for\ all}\ \ x\in K\}.
\end{align}
If the center of polarity is taken to be the origin, we denote by $K^o$ the polarity of $K$, thus $K^z=(K-z)^o+z$. For $z\in int(K)$, $(K^z)^z=K$ (see \cite{meyer-pajor-1990} or \cite[Theorem 1.6.1]{schneider-2014}).
In \cite{Berndtsson-Mastrantonis-Rubinstein-2023},  Berndtsson, Mastrantonis and Rubinstein introduced $L_p$ versions of the support function $h_K$ and polar body $K^o$ of $K$ as follows.

\begin{defn}
For $p\in(0,+\infty]$ and $K\subset \mathbb{R}^n$, the $L_p$ support function $h_{p,K}$ of $K$ is defined by
\begin{align}\label{p-support-function}
h_{p,K}(y):=\log\Big(\int_{K}e^{p\langle x,y\rangle}\frac{dx}{|K|}\Big)^{\frac{1}{p}},\ \ \ y\in \mathbb{R}^{n}.
\end{align}
where $|K|$ is the volume of $K$ and the $L_p$ polar body $K^{o,p}$ of $K$ is defined by
\begin{align}\label{p-polar body}
K^{o,p}:=\{y\in \mathbb{R}^{n}:\|y\|_{K^{o,p}}\leq1\},
\end{align}
where
\begin{align}\label{norm of polar body of K}
\|y\|_{K^{o,p}}:=\Big(\frac{1}{(n-1)!}\int^{\infty}_{0}r^{n-1}e^{-h_{p,K}(ry)}dr\Big)^{-\frac{1}{n}}.
\end{align}
\end{defn}
From (\ref{norm of polar body of K}) and the formula in \cite[P276, Remark 5.1.2]{schneider-2014}, the volume of $K^{o,p}$ can be represented in the form
\begin{align}\label{volume of p-polar body}
|K^{o,p}|=\frac{1}{n}\int_{S^{n-1}}\|u\|_{K^{o,p}}^{-n}du=\frac{1}{n!}\int_{\mathbb{R}^n}e^{-h_{p,K}(x)}dx,
\end{align}
where in the first integral the integration is with respect to Lebesgue measure
on the unit sphere $S^{n-1}$ and in the second integral the integration is with respect to Lebesgue
measure on $\mathbb{R}^n$.
It is obvious that $h_K$ and $K^o$ are $L_\infty$ versions of $h_{p,K}$ and $K^{o,p}$, i.e.,
$$h_K=h_{\infty,K}\ \ \ \ \text{and}\ \ \ \  K^o=K^{o,\infty}.$$
Berndtsson et al also introduced $L_p$ Mahler volumes $\mathcal{M}_p(K)$ of a convex body $K$ in \cite{Berndtsson-Mastrantonis-Rubinstein-2023},
\begin{align}\label{p-mahler volume}
\mathcal{M}_p(K):=|K|\int_{\mathbb{R}^n}e^{-h_{p,K}(x)}dx,
\end{align}
and the authors also introduced $L_p$ Mahler conjectures and established many important and interesting results. One of their main results
is the following $L_p$ Blaschke-Santal\'{o}'s inequality.

\begin{thma}\cite{Berndtsson-Mastrantonis-Rubinstein-2023}\label{theorem-a}
Let $p\in(0,+\infty]$. For a symmetric convex body $K\subset \mathbb{R}^n$,
$$\mathcal{M}_p(K)\leq \mathcal{M}_p(B_2^n),$$
where $B_2^n=\{x\in\mathbb{R}^n:|x_1|^2+\cdots+|x_n|^2\leq1\}$.
\end{thma}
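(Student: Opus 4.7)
My plan is to adapt the classical Steiner-symmetrization proof of the Blaschke--Santal\'o inequality to the $L_p$ setting. From (\ref{volume of p-polar body}) and (\ref{p-mahler volume}) we have the identity $\mathcal{M}_p(K) = n!\,|K|\,|K^{o,p}|$, so the theorem is a sharp upper bound on the $L_p$ volume product for symmetric bodies. The argument proceeds in three ingredients: (a) $\mathcal{M}_p$ is continuous on the space of convex bodies in the Hausdorff metric; (b) $\mathcal{M}_p$ is dilation-invariant, which follows from the direct identity $h_{p,\lambda K}(y) = h_{p,K}(\lambda y)$ by a change of variables in (\ref{p-mahler volume}); and (c) Steiner symmetrization $S_H$ across a hyperplane $H$ through the origin preserves $|K|$ and does not decrease $|K^{o,p}|$ when $K$ is centrally symmetric. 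Granted these, Gross's theorem gives a sequence of Steiner symmetrizations of $K$ that converges in the Hausdorff metric to a ball $rB_2^n$ with $|rB_2^n|=|K|$, and combining (a)--(c) yields $\mathcal{M}_p(K) \leq \mathcal{M}_p(rB_2^n) = \mathcal{M}_p(B_2^n)$.

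\textbf{Key monotonicity via a shadow system.} For (c), I would realize $S_H K$ as the midpoint of a Rogers--Shephard shadow system $\{K_t\}_{t\in[0,1]}$ with $K_0 = K$ and $K_1 = K^*$ (the reflection of $K$ across $H$): writing $K = \{(x',s) : x' \in \pi_H K,\, f(x') \leq s \leq g(x')\}$ in coordinates adapted to $H$, set $K_t = \{(x',s) : (1-t)f(x') - tg(x') \leq s \leq (1-t)g(x') - tf(x')\}$, whose convexity is immediate from convexity of $f$ and concavity of $g$, and a direct computation gives $K_{1/2} = S_H K$. Since $K$ is symmetric, $K^* = -K$ and hence $|K_0^{o,p}| = |K_1^{o,p}|$. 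It therefore suffices to prove that $t \mapsto |K_t^{o,p}|$ is log-concave on $[0,1]$, for then the boundary equality forces $|K_{1/2}^{o,p}| \geq |K_0^{o,p}|$. I would obtain this log-concavity in two sub-steps: \emph{first}, show that $(t,y) \mapsto h_{p,K_t}(y)$ is jointly convex, which implies joint log-concavity of $(t,x) \mapsto e^{-h_{p,K_t}(x)}$; \emph{second}, apply Pr\'ekopa's theorem to the representation $|K_t^{o,p}| = \frac{1}{n!}\int_{\mathbb{R}^n} e^{-h_{p,K_t}(x)}\,dx$ from (\ref{volume of p-polar body}), extracting log-concavity of the $t$-marginal.

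\textbf{Main obstacle.} The technical heart is the joint convexity of $(t,y) \mapsto h_{p,K_t}(y)$. Since $|K_t|$ is constant along a shadow system, this reduces to log-concavity of $(t,y) \mapsto \int_{K_t} e^{p\langle z,y\rangle}\,dz$. I would obtain it by viewing the function $(t,z,y) \mapsto \mathbf{1}_{\{(t,z)\,:\,z\in K_t\}}\cdot e^{p\langle z,y\rangle}$ as a log-concave density on $\mathbb{R}\times\mathbb{R}^n\times\mathbb{R}^n$: the set $\{(t,z) : z \in K_t\}$ is convex by the linear-interpolation description of the shadow system just above, and $e^{p\langle z,y\rangle}$ is log-affine in $(z,y)$; integrating out the $z$-variable via Pr\'ekopa yields the desired log-concavity in $(t,y)$, and dividing by $p$ in the logarithm returns joint convexity of $h_{p,K_t}(y)$. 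The continuity statement (a) should fall out by dominated convergence once one verifies a uniform coercive lower bound on $h_{p,K}$ along a Hausdorff-bounded family of convex bodies, which I expect to be routine.
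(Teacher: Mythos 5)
Your overall scaffolding (reduce to $\mathcal{M}_p(K)=n!\,|K|\,|K^{o,p}|$ via (\ref{volume of p-polar body})--(\ref{p-mahler volume}), show Steiner symmetrization does not decrease $|K^{o,p}|$ by realizing $S_HK$ as the midpoint of a shadow system joining $K$ to its reflection, use $|K_0^{o,p}|=|K_1^{o,p}|$ for symmetric $K$, and conclude by convergence of iterated symmetrizations to a ball) is exactly the route taken here. The gap is in your ``main obstacle'' step, and it is fatal as written. The set $\{(t,z):z\in K_t\}$ is \emph{not} convex for a parallel chord movement: in your coordinates its lower boundary is $(1-t)f(x')-tg(x')=f(x')-t\bigl(f(x')+g(x')\bigr)$, which contains the bilinear term $-t\,(f+g)(x')$ and hence is not jointly convex in $(t,x')$ unless $f+g$ is constant. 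Concretely, for $K=\mathrm{conv}\{(0,0),(1,0),(0,1)\}$ with $f\equiv 0$, $g(x_1)=1-x_1$, the points $(t,x_1,s)=(1,0,-1)$ and $(-1,1,0)$ both lie in $\{(t,z):z\in K_t\}$ but their midpoint $(0,\tfrac12,-\tfrac12)$ does not, since $(\tfrac12,-\tfrac12)\notin K_0=K$. So the Pr\'ekopa argument for joint log-concavity of $(t,y)\mapsto\int_{K_t}e^{p\langle z,y\rangle}dz$ has no log-concave integrand to start from. Worse, the target of that argument is itself false in general: already for $p=\infty$ and the sheared square $K_t=\{(x_1,x_2+tx_1):x\in[0,1]^2\}$ one has $h_{K_t}(y)=(y_1+ty_2)_++(y_2)_+$, and $(t,y)\mapsto(y_1+ty_2)_+$ vanishes at $(1,1,-1)$ and $(-1,1,1)$ but equals $1$ at their midpoint $(0,1,0)$, so $(t,y)\mapsto h_{p,K_t}(y)$ is not jointly convex. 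What is true (and what the paper proves in Theorem \ref{thm-p-support function is convex1}) is only convexity in $t$ for each \emph{fixed} argument, which is too weak to feed into Pr\'ekopa.

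This failure of joint convexity is precisely why the paper takes the more delicate route of Meyer--Pajor/Campi--Gronchi type: it proves the mixed inequality of Lemma \ref{Lem-p support function inequlity}, in which the direction-$v$ coordinate $s$ is held \emph{fixed} across all three bodies (so the problematic bilinear term $ps\beta t$ only varies in $t$) while the radial scalings $a,b,c$ satisfy the harmonic-mean relation $\tfrac2a=\tfrac1b+\tfrac1c$; Keith Ball's Lemma \ref{keith-ball-1988} then converts this into the section inclusion $\tfrac12K_{t_1}^{o,p}(s)+\tfrac12K_{t_2}^{o,p}(s)\subset K_{(t_1+t_2)/2}^{o,p}(s)$, Brunn--Minkowski gives $\tfrac{1}{n-1}$-concavity of the $(n-1)$-dimensional section volumes, and only after matching the sections at $s$ and $-s$ via Lemmas \ref{lem kops=Kop-s} and \ref{lem K1s=Ks} does one integrate in $s$ to get $|(S_vK)^{o,p}|\geq|K^{o,p}|$. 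To repair your proof you would need to replace the Pr\'ekopa step by this section-wise Ball-lemma argument (or some equivalent); the remaining ingredients of your outline are sound.
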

By taking $p\rightarrow +\infty$, one recovers the classical Blaschke-Santal\'{o}'s inequality:
$$\mathcal{M}(K)\leq \mathcal{M}(B_2^n).$$
For more reference of Blaschke-Santal\'{o}'s inequality, we refer to \cite{LYZ-2000,lutwak-zhang-1997,meyer-1991,meyer-pajor-1989,meyer-pajor-1990} and references therein.
 The idea used in the proof of Theorem A is standard: for $u\in S^{n-1}$, the Steiner symmetrization $S_uK$ of $K$ with respect to a hyperplane $u^\perp:=\{x\in\mathbb{R}^n:\langle x,u\rangle=0\}$ increases the volume of the $L_p$ polar body, i.e.,
 $$|(S_uK)^{o,p}|\geq |K^{o,p}|.$$

The notion of shadow system was introduced by Rogers and Shephard \cite{rogers-1958-02} and Shephard \cite{shephard-1964}. A shadow system along the direction $v\in S^{n-1}$ is a family of convex sets $K_t\subset\mathbb{R}^n$,
\begin{align}\label{shadow-system}
K_t=conv\{x+\alpha(x)tv:x\in K\subset\mathbb{R}^n\},
\end{align}
where \emph{conv} denotes convex hull, $K$ is an arbitrary bounded set of points, $\alpha(x)$ is bounded function on $K$ and $t\in\mathbb{R}$.
The function $\alpha(x)$ is called the speed function of the shadow system.
A parallel chord movement along the direction $v\in S^{n-1}$ is a particular type of shadow system defined by
\begin{align}\label{parallel chord movement}
K_t=\{x+\beta(x|v^\perp)tv:x\in K\},
\end{align}
where $K\subset\mathbb{R}^n$ is a convex body and $\beta(\cdot)$ is a continuous real function on the projection $K|v^\perp$
 of $K$ onto $v^\perp.$ The speed function $\beta(\cdot)$ has to be given in such a way that at time $t$ the union $K_t$ of moving chords is convex.
For the use of special shadow system, we refer to Bianchini and Colesanti \cite{Bianchini-2008}, Campi and Gronchi \cite{campi-2002,campi-2002-02,campi-2006,campi-2006-02}, Fradelizi, Meyer and Zvavitch \cite{fradelizi-meyer-2012} and Meyer and Reiser \cite{meyer-reisner-2006, meyer-reisner-2011} and references therein.

For any $v\in S^{n-1}$ and $s\in\mathbb{R}$, the projection of $K\cap(v^\perp+sv)$ of a convex body $K\subset\mathbb{R}^n$ onto $v^\perp$ is denoted by $K(s)$, i.e.,
$$K(s)=\{x':x'+sv\in K\}.$$

In this paper, we will prove that the $L_p$-support function of the shadow system $K_t$  is convex and the slice volume of $L_p$ polar body $K_t^{o,p}$ of a parallel chord movement $K_t$ is $\frac{1}{n}$-concave with respect to parameter $t$.

\begin{thm}\label{introduction-thm-p-support function is convex1}
 Let $p\in(0,\infty],v\in S^{n-1}$ and $K_t$ be a parallel movement of a convex body $K \subset \mathbb{R}^{n}$ along the direction $v$. Then the function $t\mapsto h_{p,K_t}$ is convex.
\end{thm}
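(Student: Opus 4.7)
The plan is to reduce the claim to the classical fact that the cumulant generating function (the log of a Laplace transform of a positive measure) is convex. The key observation is that a parallel chord movement is volume-preserving, so $|K_t|=|K|$ for all $t$, and moreover the map
\[
\phi_t(x):=x+\beta(x|v^{\perp})tv
\]
is a measurable bijection from $K$ onto $K_t$ with Jacobian $1$: writing $x=x'+sv$ with $x'\in v^{\perp}$, we have $\phi_t(x',s)=(x',s+\beta(x')t)$, which is an $s$-translation depending only on $x'$. Hence by a straight change of variables,
\[
\int_{K_t}e^{p\langle x,y\rangle}dx=\int_{K}e^{p\langle \phi_t(x),y\rangle}dx=\int_{K}e^{p\langle x,y\rangle}\,e^{t\,p\beta(x|v^{\perp})\langle v,y\rangle}dx.
\]

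Fix $p\in(0,\infty)$ and $y\in\mathbb{R}^n$, and set $a(x):=e^{p\langle x,y\rangle}\ge 0$ and $b(x):=p\beta(x|v^{\perp})\langle v,y\rangle\in\mathbb{R}$, neither depending on $t$. Then
\[
h_{p,K_t}(y)=\frac{1}{p}\log\!\Big(\frac{M(t)}{|K|}\Big),\qquad M(t):=\int_{K}a(x)\,e^{tb(x)}dx.
\]
The function $t\mapsto \log M(t)$ is convex: differentiating twice under the integral sign and applying the Cauchy--Schwarz inequality to the positive measure $a(x)e^{tb(x)}dx$ yields $M'(t)^2\le M(t)M''(t)$, so $(\log M)''\ge 0$. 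Since $|K|$ is constant in $t$ and $p>0$, $t\mapsto h_{p,K_t}(y)$ is convex for every $y$.

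The case $p=\infty$ is even more direct, since by the same bijection $\phi_t$,
\[
h_{\infty,K_t}(y)=\sup_{x\in K}\bigl(\langle x,y\rangle+t\beta(x|v^{\perp})\langle v,y\rangle\bigr),
\]
a pointwise supremum of affine functions of $t$, hence convex.

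There is no serious obstacle; the only care needed is in the first step, namely verifying that $\phi_t:K\to K_t$ is a volume-preserving bijection (so that the change of variables is legitimate and one need not pass through the convex hull as in the general shadow system \eqref{shadow-system}). This is precisely what distinguishes a parallel chord movement \eqref{parallel chord movement} — each chord of $K$ in direction $v$ is rigidly translated by a $v^{\perp}$-measurable amount — and it is what allows the integrand over $K_t$ to be rewritten as an integrand over $K$ with a factor exponential in $t$. Once that is in place, the convexity is the standard Cauchy--Schwarz convexity of cumulant generating functions.
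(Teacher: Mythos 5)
Your proof is correct, but it takes a genuinely different route from the paper's. The paper derives Theorem \ref{introduction-thm-p-support function is convex1} as a special case of Lemma \ref{Lem-p support function inequlity}: it slices $K_t$ into chords via Fubini, integrates each chord explicitly to produce the factor $J(z',b)=\frac{2}{psb}\sinh\big(\frac{f_v-g_v}{2}psb\big)$, applies H\"older's inequality across the two integrals, and then uses log-convexity of $J(z',\cdot)$ in the scaling parameter. That heavier machinery is there because the lemma simultaneously tracks two different base points $x',y'$ and three radial scalings $a,b,c$ with $\frac{2}{a}=\frac{1}{b}+\frac{1}{c}$ — exactly the hypothesis needed to feed Ball's Lemma \ref{keith-ball-1988} and prove the slice-volume concavity of Theorem \ref{thm-02}; convexity in $t$ alone is then read off by setting $x'=y'$ and $a=b=c$. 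You instead observe that the shear $\phi_t(x',s)=(x',s+\beta(x')t)$ is a volume-preserving bijection of $K$ onto $K_t$, so that $e^{p\,h_{p,K_t}(y)}|K|=\int_K a(x)e^{tb(x)}\,dx$ is a Laplace transform in $t$ of a fixed positive measure, and convexity of $t\mapsto h_{p,K_t}(y)$ is the standard log-convexity of such transforms (H\"older or Cauchy--Schwarz), with the $p=\infty$ case a supremum of affine functions of $t$. Your argument is shorter and more transparent for the stated theorem, avoids the explicit $\sinh$ computation entirely, and makes clear that the normalization $|K_t|=|K|$ is what keeps the statement clean; what it does not give you is the two-point, three-scaling inequality of Lemma \ref{Lem-p support function inequlity}, which the paper still needs for Theorem \ref{thm-02}. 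Both arguments use only the parallel-chord structure (rigid translation of chords measurable in $x'$), so the scope of validity is the same.
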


\begin{thm}\label{thm-02}
Let $p\in(0,+\infty]$, $v\in S^{n-1}$ and $K_t$ be a parallel chord movement of a convex body $K$ along the direction $v$. Then for all $s, t_i\in\mathbb{R}$, $i=1,2$
\begin{align}\label{introduction-slice vulume inequality for p polar body of ti}
|K_{\frac{t_1+t_2}{2}}^{o,p}(s)|^{\frac{1}{n-1}}\geq \frac{1}{2}|K_{t_1}^{o,p}(s)|^{\frac{1}{n-1}}+\frac{1}{2}|K_{t_2}^{o,p}(s)|^{\frac{1}{n-1}}.
\end{align}
\end{thm}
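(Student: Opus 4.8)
The plan is to derive the $\tfrac{1}{n-1}$-concavity in \eqref{introduction-slice vulume inequality for p polar body of ti} from a Minkowski inclusion between the $(n-1)$-dimensional slices, combined with the Brunn--Minkowski inequality in $v^{\perp}$. Writing $t_{0}=\tfrac{t_{1}+t_{2}}{2}$ and viewing each slice $K_{t}^{o,p}(s)\subset v^{\perp}\cong\mathbb{R}^{n-1}$, it suffices to prove the inclusion
\begin{align*}
\tfrac12 K_{t_{1}}^{o,p}(s)+\tfrac12 K_{t_{2}}^{o,p}(s)\subseteq K_{t_{0}}^{o,p}(s),
\end{align*}
since the $(n-1)$-dimensional Brunn--Minkowski inequality then gives
\begin{align*}
|K_{t_{0}}^{o,p}(s)|^{\frac1{n-1}}\geq\big|\tfrac12 K_{t_{1}}^{o,p}(s)+\tfrac12 K_{t_{2}}^{o,p}(s)\big|^{\frac1{n-1}}\geq\tfrac12|K_{t_{1}}^{o,p}(s)|^{\frac1{n-1}}+\tfrac12|K_{t_{2}}^{o,p}(s)|^{\frac1{n-1}}.
\end{align*}
Equivalently, the inclusion asserts that the lifted set $C_{s}=\{(t,y')\in\mathbb{R}\times v^{\perp}:y'+sv\in K_{t}^{o,p}\}$ is (midpoint) convex, i.e.\ that the gauge $(t,y')\mapsto\|y'+sv\|_{K_{t}^{o,p}}$ is convex; granting this, \eqref{introduction-slice vulume inequality for p polar body of ti} is exactly the Brunn--Minkowski statement for the parallel slices of the convex body $C_{s}$.

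The role of Theorem~\ref{introduction-thm-p-support function is convex1} is to produce the convexity that begins the inclusion. Fix $y_{i}'\in K_{t_{i}}^{o,p}(s)$, set $z_{i}=y_{i}'+sv$ and $z_{0}=\tfrac12(z_{1}+z_{2})$, and observe that $z_{0},z_{1},z_{2}$ all share the same $v$-component $s$. Because $K_{t}$ is a parallel chord movement, $|K_{t}|=|K|$, and a change of variables turns \eqref{p-support-function} into
\begin{align*}
e^{-h_{p,K_{t}}(rz)}=\Big(\tfrac1{|K|}\int_{K}e^{pr\,e(x;t,z)}\,dx\Big)^{-1/p},\qquad e(x;t,z)=\langle x,z\rangle+\beta(x|v^{\perp})\,t\,s,
\end{align*}
for every $r>0$, where $e(x;\,\cdot\,)$ is affine in $(t,z)$, so that $e(\,\cdot\,;t_{0},z_{0})=\tfrac12 e(\,\cdot\,;t_{1},z_{1})+\tfrac12 e(\,\cdot\,;t_{2},z_{2})$. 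Applying H\"older's inequality to the inner integral yields the pointwise bound
\begin{align*}
h_{p,K_{t_{0}}}(rz_{0})\leq\tfrac12 h_{p,K_{t_{1}}}(rz_{1})+\tfrac12 h_{p,K_{t_{2}}}(rz_{2}),\qquad r>0,
\end{align*}
which is the form of Theorem~\ref{introduction-thm-p-support function is convex1} compatible with the radial representation \eqref{norm of polar body of K}. Inserting this into \eqref{norm of polar body of K} gives
\begin{align*}
\|z_{0}\|_{K_{t_{0}}^{o,p}}^{-n}\geq\frac1{(n-1)!}\int_{0}^{\infty}r^{n-1}\Big(e^{-h_{p,K_{t_{1}}}(rz_{1})}\Big)^{\frac12}\Big(e^{-h_{p,K_{t_{2}}}(rz_{2})}\Big)^{\frac12}dr,
\end{align*}
and since $y_{i}'\in K_{t_{i}}^{o,p}(s)$ means $\|z_{i}\|_{K_{t_{i}}^{o,p}}\leq 1$, i.e.\ $\tfrac1{(n-1)!}\int_{0}^{\infty}r^{n-1}e^{-h_{p,K_{t_{i}}}(rz_{i})}\,dr\geq 1$, the inclusion reduces to showing that the right-hand side above is at least $1$.

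The hard part is precisely this one-dimensional radial estimate, and it cannot be closed by the pointwise bound alone. Writing $g_{i}(r)=e^{-h_{p,K_{t_{i}}}(rz_{i})}$, the Cauchy--Schwarz inequality controls $\int_{0}^{\infty}r^{n-1}\sqrt{g_{1}g_{2}}\,dr$ from \emph{above} by $\big(\int r^{n-1}g_{1}\int r^{n-1}g_{2}\big)^{1/2}$, which is the wrong direction; in fact the desired lower bound is \emph{false} for arbitrary log-concave $g_{i}$ (e.g.\ $g_{2}(r)=e^{-r^{2}}$). What must be exploited is the special structure of $\phi_{i}(r)=h_{p,K_{t_{i}}}(rz_{i})$: each $\phi_{i}$ is convex with $\phi_{i}(0)=0$ and is squeezed between two linear functions, since Jensen's inequality applied to \eqref{p-support-function} gives the lower slope $\langle \mathrm{bar}(K_{t_{i}}),z_{i}\rangle$ (the barycenter of $K_{t_{i}}$ paired with $z_{i}$) while $h_{p,K}\leq h_{K}$ gives the upper slope $h_{K_{t_{i}}}(z_{i})$. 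Thus I would recast the required inequality as a statement about convex functions dominated by a linear function, whose extremal case is $\phi_{i}$ linear and yields equality — this extremal case is exactly the classical limit $p\to\infty$, where the radial integral collapses and the inclusion follows immediately from the pointwise bound. Concretely, I would reparametrize the three integrals by $r\mapsto r\,\|z_{i}\|_{K_{t_{i}}^{o,p}}^{-1}$ so that the radial variables enter through a common average, and then close the estimate with a tailored one-dimensional Borell--Brascamp--Lieb argument in which the linear squeeze on $\phi_{i}$ is precisely what certifies the hypotheses. Carrying out this last reduction rigorously, rather than the Minkowski inclusion or the Brunn--Minkowski step, is where the genuine difficulty of the theorem resides.
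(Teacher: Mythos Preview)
Your overall architecture --- prove the Minkowski inclusion $\tfrac12 K_{t_1}^{o,p}(s)+\tfrac12 K_{t_2}^{o,p}(s)\subset K_{t_0}^{o,p}(s)$ and then apply Brunn--Minkowski in $v^{\perp}$ --- is exactly the paper's, and you correctly diagnose that the ``diagonal'' bound $h_{p,K_{t_0}}(rz_0)\le\tfrac12 h_{p,K_{t_1}}(rz_1)+\tfrac12 h_{p,K_{t_2}}(rz_2)$ (all three at the \emph{same} radius $r$) is not enough, since Cauchy--Schwarz on the radial integral goes the wrong way. The gap is real, and you are honest about it. But the remedy you sketch --- exploiting that $\phi_i(r)=h_{p,K_{t_i}}(rz_i)$ is squeezed between two linear functions, reparametrizing, and invoking an unspecified Borell--Brascamp--Lieb variant --- does not close the argument: a linear sandwich on $\phi_i$ does not by itself certify any BBL hypothesis that would give the required lower bound on $\int_0^\infty r^{n-1}\sqrt{g_1g_2}\,dr$.

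The paper's fix is to strengthen the pointwise inequality \emph{before} integrating, to one valid at radii $a,b,c$ linked by the \emph{harmonic} mean $\tfrac{2}{a}=\tfrac{1}{b}+\tfrac{1}{c}$:
\[
\frac{c}{b+c}\,h_{p,K_{t_1}}(bx'+bsv)+\frac{b}{b+c}\,h_{p,K_{t_2}}(cy'+csv)\ \ge\ h_{p,K_{t_0}}\Big(a\,\frac{x'+y'}{2}+asv\Big).
\]
To see this, one writes $h_{p,K_t}$ via Fubini over $K|v^{\perp}$, which produces the factor $J(z',r)=\tfrac{2}{psr}\sinh\!\big(\tfrac{f_v-g_v}{2}\,psr\big)$; H\"older with exponents $\tfrac{b+c}{c},\tfrac{b+c}{b}$ on the $z'$-integral reduces everything to $J(z',a)\le J(z',b)^{c/(b+c)}J(z',c)^{b/(b+c)}$, which holds because $r\mapsto \sinh(cr)/r$ is log-convex. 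With this harmonic-mean inequality in hand, the paper invokes Ball's one-dimensional lemma: if $H(a)\ge F(b)^{c/(b+c)}G(c)^{b/(b+c)}$ whenever $\tfrac{2}{a}=\tfrac{1}{b}+\tfrac{1}{c}$, then
\[
2\Big(\int_0^\infty a^{n-1}H\,da\Big)^{-1/n}\le\Big(\int_0^\infty b^{n-1}F\,db\Big)^{-1/n}+\Big(\int_0^\infty c^{n-1}G\,dc\Big)^{-1/n}.
\]
Applied with $F=e^{-h_{p,K_{t_1}}(\,\cdot\,z_1)}$, $G=e^{-h_{p,K_{t_2}}(\,\cdot\,z_2)}$, $H=e^{-h_{p,K_{t_0}}(\,\cdot\,z_0)}$, this gives directly $\|z_0\|_{K_{t_0}^{o,p}}\le\tfrac12\|z_1\|_{K_{t_1}^{o,p}}+\tfrac12\|z_2\|_{K_{t_2}^{o,p}}$ and hence the inclusion. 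So the idea you are missing is precisely the upgrade from equal-radius convexity to harmonic-mean-radius convexity (driven by the log-convexity of $\sinh(cr)/r$), together with Ball's lemma, which is designed to convert exactly that hypothesis into the additive norm bound you need.
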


Let $K$ be a convex body. For any $v\in S^{n-1}$, we have
\begin{align}\label{body-K}
K=\{x'+sv:x'\in K|v^\perp,  g_v(x')\leq s\leq f_v(x')\},
\end{align}
where $g_v(x')$ and $-f_v(x')$ are convex functions on $K|v^\perp$. If we take $\beta(\cdot)=-\big(g_v(\cdot)+ f_v(\cdot)\big)$, then $K_0=K$, $K_1=K^v$ is the reflection of $K$ in the hyperplane $v^\perp$, and $K_{\frac{1}{2}}=S_vK$ is the Steiner symmetrization of $K$ with respect to $v^\perp$.
Taking $t_1=0$, $t_2=1$ and $\beta(\cdot)=-\big(g_v(\cdot)+ f_v(\cdot)\big)$ in Theorem \ref{thm-02}, we have the following corollary.

\begin{corr}
 Let $K$ be a symmetric convex body. Then for any $v\in S^{n-1}$,
$$|(S_vK)^{o,p}|\geq |K^{o,p}|.$$
\end{corr}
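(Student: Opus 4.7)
The plan is to specialize Theorem~\ref{thm-02} to the parallel chord movement already singled out in the excerpt, the one driven by $\beta(\cdot)=-(g_v(\cdot)+f_v(\cdot))$, so that $K_0=K$, $K_1=K^v$ (the reflection of $K$ in $v^{\perp}$), and $K_{1/2}=S_vK$. Inserting $t_1=0$ and $t_2=1$ into the slice inequality~(\ref{introduction-slice vulume inequality for p polar body of ti}) yields, for every $s\in\mathbb{R}$,
\begin{align*}
|(S_vK)^{o,p}(s)|^{1/(n-1)}\;\geq\;\tfrac12\,|K^{o,p}(s)|^{1/(n-1)}+\tfrac12\,|(K^v)^{o,p}(s)|^{1/(n-1)}.
\end{align*}

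The next step is to show that the two terms on the right coincide, so that the right-hand side collapses to $|K^{o,p}(s)|^{1/(n-1)}$. Letting $R_v$ denote the orthogonal reflection in $v^{\perp}$, a change of variable $x\mapsto R_vx$ in the integral~(\ref{p-support-function}), together with $\langle R_vx,y\rangle=\langle x,R_vy\rangle$, shows that $h_{p,K^v}(y)=h_{p,K}(R_vy)$. Via~(\ref{norm of polar body of K})--(\ref{p-polar body}) this promotes to $(K^v)^{o,p}=R_v(K^{o,p})$, and slicing in direction $v$ yields $|(K^v)^{o,p}(s)|=|K^{o,p}(-s)|$. On the other hand, the hypothesis $K=-K$ together with the substitution $x\mapsto -x$ in~(\ref{p-support-function}) gives $h_{p,K}(-y)=h_{p,K}(y)$, so $K^{o,p}$ is itself centrally symmetric and $|K^{o,p}(-s)|=|K^{o,p}(s)|$. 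Combining these two identities, $|(K^v)^{o,p}(s)|=|K^{o,p}(s)|$.

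Feeding this back into the slice inequality produces $|(S_vK)^{o,p}(s)|\geq|K^{o,p}(s)|$ for every $s$, and integrating in $s$ (Fubini) yields the desired $|(S_vK)^{o,p}|\geq|K^{o,p}|$. I do not anticipate a serious obstacle: the argument is a direct specialization of Theorem~\ref{thm-02} plus two elementary symmetries of the integral defining $h_{p,K}$. The only auxiliary point worth a line of justification is that the chosen speed function $\beta$ really does generate a parallel chord movement, i.e.\ that the $K_t$ remain convex throughout $t\in[0,1]$; this is the classical Steiner construction connecting symmetrization with parallel chord movements and requires no new input.
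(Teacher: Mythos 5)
Your proposal is correct and follows essentially the same route as the paper: the paper also specializes the shadow system with speed $\beta=-(g_v+f_v)$ at $t_1=0$, $t_2=1$, and the two symmetry identities you re-derive ($(K^v)^{o,p}(s)=K^{o,p}(-s)$ and $K^{o,p}(-s)=K^{o,p}(s)$ for $K=-K$) are exactly the content of its Lemma~\ref{lem K1s=Ks} and Lemma~\ref{lem kops=Kop-s}. No gaps.
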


The Minkowski sum of two convex bodies $K,L\subset \mathbb{R}^{n}$ is the convex body defined by
\begin{align}\label{sum-K  L}
K+L:=\{x+y\in\mathbb{R}^{n}:x\in K ,y\in L\}
\end{align}
and $K-L:=K+(-L)$. It was proved in \cite{Alonso-2016} (see also \cite{Artstein--2015}) that for any two convex bodies $K,L \subseteq \mathbb{R}^{n}$ with $o\in int K\cap int L$
\begin{align}\label{classical Rogers-Shephard inequality}
|(K\cap L)^o||(K-L)^o|\leq|K^o||L^o|,
\end{align}
and in \cite{Alonso-2016} Alonso at al also point out that (\ref{classical Rogers-Shephard inequality}) only converges to equality
when we consider sequences of sets $\{(K_n^o)_n\}$ and $\{(L_n^o)_n\}$ converging to simplices with $o$ in
one of the vertices and the same outer normal vectors at the facets that pass through the origin.

A reverse inequality was proved by Alonso in \cite{Alonso-2019} as follows:
Let $K,L \subseteq \mathbb{R}^{n}$ be convex bodies such that $K^o$ and $L^o$ have opposite barycenters.
\begin{align}\label{reverse classical Rogers-Shephard inequality}
|(K\cap L)^o||(K-L)^o|\geq\frac{(n!)^2}{(2n)!}|K^o||L^o|
\end{align}

For more references on Rogers-Shephard inequalities and its reverse, we refer to\cite{Alonso-2013,Alonso-2016,Alonso-2019,Artstein--2015,milman-2000,rogers-1957,rogers-1958} and reference therein.
In Section $3$ , we prove the following volumes inequality which is a reverse  Rogers-Shephard type inequality about $L_p$ polar bodies.
\begin{thm}\label{thm-reverse--R-S-type-inequality section1}
Let $p\in(0,+\infty]$ and $K,L \subset \mathbb{R}^{n}$ be convex bodies with $o\in int K\cap int L$ such that $K^{o,p}$ and $L^{o,p}$ have opposite barycenters.Then
$$C^{n}_{2n}\Big|conv\Big(K^{o,p}\cup L^{o,p}\Big)\Big|\cdot\Big|\Big[(K^{o,p})^{o}-(L^{o,p})^{o}\Big]^o\Big|\geq |K^{o,p}||L^{o,p}|,$$
where $C^{n}_{2n}=\frac{(2n)!}{(n!)^2}$ is the combination number formula.
\end{thm}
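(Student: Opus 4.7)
The plan is to obtain Theorem \ref{thm-reverse--R-S-type-inequality section1} as a direct consequence of the classical reverse Rogers-Shephard inequality (\ref{reverse classical Rogers-Shephard inequality}) applied to the polars of $K^{o,p}$ and $L^{o,p}$, combined with two elementary duality identities for convex bodies containing the origin in their interiors. The guiding observation is that the constant $\frac{(n!)^2}{(2n)!}$ in (\ref{reverse classical Rogers-Shephard inequality}) equals $1/C^n_{2n}$, so once the bodies are set up correctly the stated inequality should emerge after unwinding the notation.

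More precisely, I would set $\widetilde{K} := (K^{o,p})^o$ and $\widetilde{L} := (L^{o,p})^o$. Since $K,L$ are convex bodies with $o \in \mathrm{int}\,K \cap \mathrm{int}\,L$, one checks from (\ref{p-polar body})--(\ref{norm of polar body of K}) that $K^{o,p}$ and $L^{o,p}$ are convex bodies with $o$ in their interiors, and hence so are $\widetilde{K}$ and $\widetilde{L}$. Biduality then gives $\widetilde{K}^o = K^{o,p}$ and $\widetilde{L}^o = L^{o,p}$, so the assumption of the theorem (that $K^{o,p}$ and $L^{o,p}$ have opposite barycenters) is exactly the hypothesis required by (\ref{reverse classical Rogers-Shephard inequality}) for the pair $(\widetilde{K}, \widetilde{L})$. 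Applying (\ref{reverse classical Rogers-Shephard inequality}) to this pair yields
$$|(\widetilde{K}\cap\widetilde{L})^o|\cdot|(\widetilde{K}-\widetilde{L})^o| \geq \frac{1}{C^n_{2n}}\,|K^{o,p}|\,|L^{o,p}|.$$
To identify the left-hand side, I would invoke the standard duality identity $(A\cap B)^o = \mathrm{conv}(A^o \cup B^o)$, valid for convex bodies $A,B$ with $o$ in their interiors, which gives $(\widetilde{K}\cap\widetilde{L})^o = \mathrm{conv}(K^{o,p} \cup L^{o,p})$; the factor $(\widetilde{K}-\widetilde{L})^o$ is already $[(K^{o,p})^o - (L^{o,p})^o]^o$ by definition. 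Multiplying through by $C^n_{2n}$ then produces the inequality stated in the theorem.

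I do not expect any substantial analytic obstacle, since the argument uses no specific feature of the $L_p$ polar body beyond its being a convex body that contains $o$ in its interior. The only points that require care are verifying these regularity properties of $K^{o,p}$ from the definition, and citing the duality $(A\cap B)^o = \mathrm{conv}(A^o \cup B^o)$, which is a standard consequence of support-function duality (cf.\ \cite{schneider-2014}). The novelty therefore lies not in the computation but in recognizing that the reverse Rogers-Shephard inequality transfers to the $L_p$ setting via this one-line substitution of polars.
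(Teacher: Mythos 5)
Your proposal is correct, and it takes a genuinely different route from the paper. You observe that the theorem, after unwinding notation, is nothing but inequality (\ref{reverse classical Rogers-Shephard inequality}) applied to the pair $\widetilde{K}=(K^{o,p})^{o}$, $\widetilde{L}=(L^{o,p})^{o}$: biduality gives $\widetilde{K}^{o}=K^{o,p}$ and $\widetilde{L}^{o}=L^{o,p}$, the hypothesis on opposite barycenters transfers verbatim, and Lemma \ref{lem-conv(KL)} identifies $(\widetilde{K}\cap\widetilde{L})^{o}$ with $conv(K^{o,p}\cup L^{o,p})$, while $(\widetilde{K}-\widetilde{L})^{o}$ is by definition the second factor in the statement. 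Since every object in the conclusion depends on $K$ and $L$ only through the convex bodies $K^{o,p}$ and $L^{o,p}$, this one-line substitution does prove the theorem, modulo the routine regularity facts you flag (that $K^{o,p}$ is a convex body with $o$ in its interior, which is Lemma \ref{lem-0 in K=0 in kop} together with the convexity results of \cite{Berndtsson-Mastrantonis-Rubinstein-2023}, and that $(K^{o,p})^{o}-(L^{o,p})^{o}$ contains $o$ in its interior). The paper instead gives a self-contained proof that essentially re-runs Alonso's argument at the level of the log-concave functions $e^{-\|x\|_{K^{o,p}}}$ and $e^{-\|y\|_{L^{o,p}}}$: it decomposes the level sets $\tilde{C}_t$, applies the Milman--Pajor lemma to the section function $\phi_t$, uses the barycenter hypothesis to evaluate $\phi_t$ at $o$, and computes the two sides via the Legendre-transform identity of Lemma \ref{lem-legendre transform} and the formula $\int e^{-\|x\|_{A}}dx=n!|A|$. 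What the paper's longer route buys is self-containedness and the flexibility noted in its closing Remark, where replacing the Milman--Pajor step by the Rogers--Shephard projection--section inequality yields the reverse estimate; what your route buys is the clarifying observation that the theorem contains no new $L_p$-specific content beyond the substitution of polars.
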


The ideas and techniques of Berndtsson, Mastrantonis and Rubinstein\cite{Berndtsson-Mastrantonis-Rubinstein-2023}, Meyer and Werner\cite{meyer-werner-1998}, Meyer and Pajor \cite{meyer-pajor-1990} and Alonso at al \cite{Alonso-2013, Alonso-2016, Alonso-2019} play a critical role throughout this paper. It would be impossible to overstate our reliance on their work.

\section{p Polar Body of Shawdow System}

For quick reference, we list some notations and definitions that will be useful in all the paper. See \cite{schneider-2014} for additional detail.
The origin, unit sphere, and closed unit ball in $n$-dimensional Euclidean space $\mathbb{R}^n$ are denoted by $o$, $S^{n-1}$ and $B^n_2$. If $u\in S^{n-1}$, then $u^\perp$ is the hyperplane containing $o$ and orthogonal $u$. For any convex body $K\subset\mathbb{R}^n$, we will denote by $I_K^\infty$ the convex characteristic function of $K$ which is the function defined by $I_K^\infty(x)=0$ if $x\in K$ and $I_K^\infty(x)=+\infty$ if $x\notin K$.

In addition to its denoting absolute value, we shall use $|\cdot|$ to denote the standard Euclidean norm on $\mathbb{R}^n$ and to denote $k$-dimensional volume in $\mathbb{R}^n$, $k\leq n$.

In this section we will prove the following theorems.

\begin{thm}\label{thm-p-support function is convex1}
 Let $p\in(0,\infty],v\in S^{n-1}$ and $K_t$ be a parallel movement of a convex body $K \subset \mathbb{R}^{n}$ along the direction $v$. Then the function $t\mapsto h_{p,K_t}$ is convex, i.e.,for any $x'\in v^\perp\ , s, t_i\in\mathbb{R}$, $i=1, 2$, we have
$$\frac{1}{2}h_{p,K_{t_{1}}}(x'+sv)+\frac{1}{2}h_{p,K_{t_{2}}}(x'+sv) \geq h_{p,K_{\frac{t_{1}+t_{2}}{2}}}(x'+sv).$$
\end{thm}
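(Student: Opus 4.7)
The plan is to reduce the claim to the log-convexity of the Laplace transform of a positive measure on $v^\perp$. The engine is Cauchy--Schwarz once we show that the $t$-dependence of $\int_{K_t}e^{p\langle x,y\rangle}dx$ factors as a moment generating function, while $|K_t|$ stays constant.

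\textbf{Step 1: parametrize.} Write $K = \{x'+sv : x'\in K|v^\perp,\ g_v(x')\le s\le f_v(x')\}$ as in (\ref{body-K}). Since the chord through $x'\in K|v^\perp$ is translated by $\beta(x')t$ in the $v$-direction,
\begin{equation*}
K_t=\bigl\{x'+sv : x'\in K|v^\perp,\ g_v(x')+\beta(x')t\le s\le f_v(x')+\beta(x')t\bigr\}.
\end{equation*}
By Fubini, $|K_t|=\int_{K|v^\perp}\bigl(f_v(x')-g_v(x')\bigr)\,dx'=|K|$, i.e.\ the volume is independent of $t$.

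\textbf{Step 2: factor the integrand.} For $y=x_0'+s_0v$ with $x_0'\in v^\perp$, split $\langle x,y\rangle=\langle x',x_0'\rangle+ss_0$ and integrate in $s$ first. After the substitution $s\mapsto s-\beta(x')t$,
\begin{equation*}
\int_{K_t}e^{p\langle x,y\rangle}dx=\int_{K|v^\perp}F(x')\,e^{ps_0\beta(x')t}\,dx',
\end{equation*}
where, for $s_0\ne 0$, $F(x')=e^{p\langle x',x_0'\rangle}\cdot\dfrac{e^{ps_0f_v(x')}-e^{ps_0g_v(x')}}{ps_0}\ge 0$, and for $s_0=0$, $F(x')=e^{p\langle x',x_0'\rangle}(f_v(x')-g_v(x'))$ (in which case the whole expression is $t$-independent and there is nothing to prove).

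\textbf{Step 3: Cauchy--Schwarz.} Treating $F(x')\,dx'$ as a nonnegative measure,
\begin{equation*}
\int F(x')\,e^{ps_0\beta(x')\frac{t_1+t_2}{2}}dx'=\int\bigl(F(x')e^{ps_0\beta(x')t_1}\bigr)^{1/2}\bigl(F(x')e^{ps_0\beta(x')t_2}\bigr)^{1/2}dx'
\end{equation*}
is bounded above by the geometric mean of $\int F(x')e^{ps_0\beta(x')t_i}dx'$, $i=1,2$. Taking $\tfrac{1}{p}\log$ and subtracting the constant $\tfrac{1}{p}\log|K|=\tfrac{1}{p}\log|K_t|$ yields
\begin{equation*}
h_{p,K_{(t_1+t_2)/2}}(y)\le\tfrac12 h_{p,K_{t_1}}(y)+\tfrac12 h_{p,K_{t_2}}(y).
\end{equation*}

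\textbf{Step 4: the case $p=\infty$.} Here $h_{\infty,K_t}(y)=\max_{x\in K}\bigl(\langle x,y\rangle+t\beta(x|v^\perp)\langle v,y\rangle\bigr)$ is a supremum of functions that are affine in $t$, hence convex in $t$ (alternatively, pass to the limit $p\to\infty$ in Step 3).

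The only genuine content is the factorization in Step 2 that isolates $t$ inside an exponential with a sign-independent ``frequency'' $ps_0\beta(x')$; after that, log-convexity is automatic. The mildest obstacle is bookkeeping the edge cases $s_0=0$ and $p=\infty$, both of which collapse to trivialities. Nothing here requires symmetry or positivity of $\beta$, which is what makes the statement apply to arbitrary parallel chord movements.
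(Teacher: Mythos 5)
Your proof is correct and follows essentially the same route as the paper: Fubini along $v$ to reduce to an integral over $K|v^\perp$ in which the $t$-dependence sits in a single exponential factor, followed by H\"older (here Cauchy--Schwarz) to get log-convexity. The paper packages this as the more general Lemma \ref{Lem-p support function inequlity}, with radial scalings $a,b,c$ satisfying $\frac{2}{a}=\frac{1}{b}+\frac{1}{c}$ (needed later for Lemma \ref{lem-section of Kt1-Kt2-subset Kt12}); your argument is precisely its specialization $b=c=a$, $x'=y'$.
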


\begin{thm}\label{thm-main thm01}
Let $p\in(0,+\infty]$, $v\in S^{n-1}$ and $K_t$ be a parallel chord movement of a convex body $K$ along the direction $v$. Then for all $s, t_i\in\mathbb{R}$, $i=1,2$
\begin{align}\label{slice vulume inequality for p polar body of ti}
|K_{\frac{t_1+t_2}{2}}^{o,p}(s)|^{\frac{1}{n-1}}\geq \frac{1}{2}|K_{t_1}^{o,p}(s)|^{\frac{1}{n-1}}+\frac{1}{2}|K_{t_2}^{o,p}(s)|^{\frac{1}{n-1}}.
\end{align}
\end{thm}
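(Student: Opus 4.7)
My plan is to reduce the $\tfrac{1}{n-1}$-concavity of $t\mapsto |K_t^{o,p}(s)|$ to the set inclusion
\[
K_{\bar t}^{o,p}(s)\;\supseteq\;\tfrac{1}{2}K_{t_1}^{o,p}(s)+\tfrac{1}{2}K_{t_2}^{o,p}(s),\qquad \bar t=\tfrac{t_1+t_2}{2},
\]
whereupon classical Brunn--Minkowski in $v^\perp\cong\mathbb{R}^{n-1}$ immediately yields \eqref{slice vulume inequality for p polar body of ti}. Equivalently, one shows that the set $A:=\{(y',t)\in v^\perp\times\mathbb{R}:\, y'+sv\in K_t^{o,p}\}$ is convex and applies Brunn's concavity principle to its $t$-slices.

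For the inclusion I first strengthen Theorem~\ref{thm-p-support function is convex1} to the joint statement that, for every fixed $s\in\mathbb{R}$ and $r>0$, the map $(y',t)\mapsto h_{p,K_t}(r(y'+sv))$ is \emph{jointly} convex on $v^\perp\times\mathbb{R}$: indeed, the exponent $pr[\langle z,y'+sv\rangle+s\beta(z|v^\perp)t]$ in the defining log-integral is affine in $(y',t)$ for fixed $r,s,z$, so H\"older's inequality applied inside the logarithm delivers joint convexity. Taking $y_i'\in K_{t_i}^{o,p}(s)$---which by \eqref{norm of polar body of K} means $\int_0^\infty G_i(r)\,dr\ge(n-1)!$ with $G_i(r):=r^{n-1}e^{-h_{p,K_{t_i}}(r(y_i'+sv))}$---this fixed-$r$ joint convexity yields the pointwise comparison $\bar G(r)\ge\sqrt{G_1(r)G_2(r)}$ for $\bar G(r):=r^{n-1}e^{-h_{p,K_{\bar t}}(r(\bar y'+sv))}$, with $\bar y'=(y_1'+y_2')/2$. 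The target is $\int_0^\infty\bar G(r)\,dr\ge(n-1)!$, which would place $\bar y'+sv$ in $K_{\bar t}^{o,p}$ and prove the inclusion.

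Extracting this integral bound is the delicate part: plain Cauchy--Schwarz controls $\int\sqrt{G_1G_2}$ from \emph{above}, so the pointwise inequality alone is insufficient. To resolve this I would upgrade it to the Pr\'ekopa--Leindler hypothesis $\bar G(\tfrac{r_1+r_2}{2})\ge\sqrt{G_1(r_1)G_2(r_2)}$ for all $r_1,r_2>0$, using (i) the AM--GM estimate $\bar r^{n-1}\ge(r_1r_2)^{(n-1)/2}$ for the polynomial prefactor and (ii) the convexity of $r\mapsto h_{p,K_t}(ru)$ (which makes each $G_i$ log-concave in $r$) combined with the joint convexity of Step~1, deployed in a coupled way to absorb the bilinear mismatch between $\bar r\,\bar y'$ and $\tfrac{1}{2}(r_1y_1'+r_2y_2')$ within the common slice at height $\bar r s$. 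Once this midpoint form is available, one-dimensional Pr\'ekopa--Leindler gives $\int_0^\infty\bar G\ge\sqrt{\int G_1\cdot\int G_2}\ge(n-1)!$, establishing the inclusion; an application of Brunn--Minkowski in $v^\perp$ then finishes the proof. The hardest step is thus the passage from the pointwise inequality to its Pr\'ekopa--Leindler midpoint form, and its resolution rests on the compound log-concavity that $G_i$ inherits from Theorem~\ref{thm-p-support function is convex1} together with the $r$-convexity of the $L_p$-support function.
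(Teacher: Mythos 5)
Your overall skeleton is the same as the paper's: reduce \eqref{slice vulume inequality for p polar body of ti} to the inclusion $\tfrac12 K_{t_1}^{o,p}(s)+\tfrac12 K_{t_2}^{o,p}(s)\subset K_{\frac{t_1+t_2}{2}}^{o,p}(s)$ and finish with Brunn--Minkowski in $v^\perp$, and your first step (joint convexity of $(y',t)\mapsto h_{p,K_t}(r(y'+sv))$ for fixed $r,s$, via H\"older applied to an affine exponent) is correct and is essentially the case $a=b=c$ of the paper's Lemma~\ref{Lem-p support function inequlity}. The gap is exactly where you locate it, and your proposed repair does not work: the Pr\'ekopa--Leindler midpoint hypothesis $\bar G\bigl(\tfrac{r_1+r_2}{2}\bigr)\ge\sqrt{G_1(r_1)G_2(r_2)}$ with the \emph{arithmetic} mean in $r$ is false in general. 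The obstruction is that $\tfrac{r_1+r_2}{2}\cdot\tfrac{y_1'+y_2'}{2}$ is not a convex combination of $r_1y_1'$ and $r_2y_2'$, and no amount of log-concavity of the individual $G_i$ closes this. Concretely, take $p=\infty$, zero speed and $t_1=t_2$, so $h_{p,K_t}(ru)=rh_K(u)$ is $1$-homogeneous; writing $h_i=h_K(y_i'+sv)$ and choosing $y_1',y_2'$ so that $h_K\bigl(\tfrac{y_1'+y_2'}{2}+sv\bigr)=\tfrac{h_1+h_2}{2}$ (e.g.\ supported at a common point of $K$), the exponential part of your hypothesis reduces to $(r_1-r_2)(h_1-h_2)\ge0$, which fails whenever $r_1<r_2$ and $h_1>h_2$; the deficit $e^{(r_1-r_2)(h_1-h_2)/4}$ can be made arbitrarily small while the AM--GM gain on the prefactor $r^{n-1}$ stays bounded.

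The correct way to pass from the pointwise convexity to the integral bound is the one the paper uses: parametrize by the \emph{harmonic} mean $\tfrac2a=\tfrac1b+\tfrac1c$ with the asymmetric weights $\tfrac{c}{b+c},\tfrac{b}{b+c}$, so that $a\,\tfrac{x'+y'}{2}+asv=\tfrac{c}{b+c}(bx'+bsv)+\tfrac{b}{b+c}(cy'+csv)$ is a genuine convex combination and your joint-convexity argument applies verbatim (this is Lemma~\ref{Lem-p support function inequlity}); one then feeds the resulting hypothesis
\begin{equation*}
e^{-h_{p,K_{\frac{t_1+t_2}{2}}}(a\frac{x'+y'}{2}+asv)}\ \ge\ \Bigl(e^{-h_{p,K_{t_1}}(bx'+bsv)}\Bigr)^{\frac{c}{b+c}}\Bigl(e^{-h_{p,K_{t_2}}(cy'+csv)}\Bigr)^{\frac{b}{b+c}}
\end{equation*}
into Ball's lemma (\cite[Theorem~5]{keith-ball-1988}, the paper's Lemma~\ref{keith-ball-1988}) with $q=n$, which is precisely the device that converts a harmonic-mean pointwise inequality into the Minkowski-type inequality $\|\tfrac{x'+y'}{2}+sv\|_{K^{o,p}_{\frac{t_1+t_2}{2}}}\le\tfrac12\|x'+sv\|_{K^{o,p}_{t_1}}+\tfrac12\|y'+sv\|_{K^{o,p}_{t_2}}$ and hence the inclusion. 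Without Ball's lemma (or an equivalent substitution $r\mapsto 1/r$ argument), your plan does not close.
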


\begin{lem}\label{lem kops=Kop-s}\cite[Corollary 5.16]{Berndtsson-Mastrantonis-Rubinstein-2023}
Let $p\in(0,+\infty]$, $v\in S^{n-1}$ and $K$ be an origin symmetric convex body. Then for all $s\in\mathbb{R}$, then $$K^{o,p}(s)=K^{o,p}(-s).$$
\end{lem}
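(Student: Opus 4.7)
The plan is to push the central symmetry of $K$ through the construction of $K^{o,p}$ and then down to its slices perpendicular to $v$. The first step is to show $h_{p,K}$ is even on $\mathbb{R}^n$: substituting $x\mapsto -x$ in \eqref{p-support-function} and using $-K=K$ yields $h_{p,K}(-y)=h_{p,K}(y)$. The second step is to transport this evenness to the gauge defined in \eqref{norm of polar body of K}. Its integrand depends on $y$ only through $h_{p,K}(ry)$, so evenness gives $\|-y\|_{K^{o,p}}=\|y\|_{K^{o,p}}$, which is exactly the statement that $K^{o,p}$ is origin-symmetric in $\mathbb{R}^n$.

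The third step is a direct set-theoretic manipulation on slices. For any $x'\in v^\perp$,
\begin{equation*}
x'\in K^{o,p}(s)\ \iff\ x'+sv\in K^{o,p}\ \iff\ -(x'+sv)\in K^{o,p}\ \iff\ -x'\in K^{o,p}(-s),
\end{equation*}
where the middle equivalence uses the origin-symmetry of $K^{o,p}$ established in Step 2. This identifies $K^{o,p}(-s)$ with $-K^{o,p}(s)$ as subsets of $v^\perp$.

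The main obstacle I anticipate is bridging the gap between this ``reflected'' equality and the literal set identity printed in the statement. One way to close it is to invoke the additional structural result in \cite{Berndtsson-Mastrantonis-Rubinstein-2023}: when $K$ is origin-symmetric, each slice $K^{o,p}(s)\subset v^\perp$ is itself centrally symmetric in $v^\perp$, so $-K^{o,p}(s)=K^{o,p}(s)$ and the reflected identity upgrades to $K^{o,p}(s)=K^{o,p}(-s)$. In any event, the downstream application in Theorem \ref{thm-main thm01} only uses the consequence $|K^{o,p}(s)|=|K^{o,p}(-s)|$, which is immediate from the reflection identity since Lebesgue measure on $v^\perp$ is invariant under $x'\mapsto -x'$.
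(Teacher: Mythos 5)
The paper offers no proof of this lemma at all: it is imported verbatim as \cite[Corollary 5.16]{Berndtsson-Mastrantonis-Rubinstein-2023}, so there is no in-paper argument to measure you against. Judged on its own, your Steps 1--3 are correct and complete: $h_{p,K}$ is even when $K=-K$, hence $\|-y\|_{K^{o,p}}=\|y\|_{K^{o,p}}$ by \eqref{norm of polar body of K}, hence $K^{o,p}=-K^{o,p}$, and the slice computation then gives the reflection identity $K^{o,p}(-s)=-K^{o,p}(s)$. That is genuinely all the paper ever uses: in the corollary following Theorem \ref{thm-main thm01} the lemma enters only through $|K^{o,p}(s)|=|K^{o,p}(-s)|$, which your identity yields immediately.

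The gap is in your proposed bridge to the literal set identity. The claim that each slice $K^{o,p}(s)$ of an origin-symmetric body is itself centrally symmetric in $v^\perp$ is false, and no such ``additional structural result'' is available. Already for $p=\infty$: take $n=2$, $v=e_2$, and let $L=\mathrm{conv}\{(0,1),(2,1),(0,-1),(-2,-1)\}$, an origin-symmetric parallelogram; then $L=K^{o}$ for the origin-symmetric body $K=L^{o}$, yet $L(1)=[0,2]$ while $L(-1)=[-2,0]$, so $K^{o}(1)\neq K^{o}(-1)$ even though $K^{o}(-1)=-K^{o}(1)$. By continuity the same failure occurs for large finite $p$. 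Central symmetry of the individual slices would require $K$ to be invariant under reflection in $v^\perp$, not merely under $x\mapsto -x$. So the statement as printed (equality of slices as sets) cannot be reached by your route --- and, as your example shows, is not true in the stated generality; the honest conclusion of your argument is the reflection identity $K^{o,p}(-s)=-K^{o,p}(s)$ together with its corollary $|K^{o,p}(s)|=|K^{o,p}(-s)|$, which is exactly what the downstream application requires. You should either restate the lemma in that weaker (correct) form or add the hypothesis that $K$ is symmetric about $v^\perp$.
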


\begin{lem}\label{lem K1s=Ks}
Let $p\in(0,+\infty]$, $v\in S^{n-1}$ and $K_t$ be a parallel chord movement of a convex body $K$ with speed function
$$\beta(\cdot):=-\Big(f_v(\cdot)+g_v(\cdot)\Big)$$
along the direction $v$. Then for all $s\in\mathbb{R}$,
$$K^{o,p}(s)=K_1^{o,p}(s).$$
\end{lem}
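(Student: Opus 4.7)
\medskip
\noindent\textbf{Proof plan.} The strategy is to identify $K_1$ explicitly, transfer the identification to $L_p$ polar bodies via a change of variables, and then appeal to Lemma \ref{lem kops=Kop-s}.

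First I would use the speed function to recognize $K_1$ as the reflection of $K$ in the hyperplane $v^{\perp}$. Recall $K=\{x'+sv:x'\in K|v^{\perp},\ g_v(x')\leq s\leq f_v(x')\}$. The parallel chord movement sends the point $x'+sv\in K$ to $x'+(s+t\beta(x'))v$, so at $t=1$ with $\beta(x')=-(f_v(x')+g_v(x'))$ the chord at $x'$ becomes $[-f_v(x'),-g_v(x')]$. Hence $K_1=R_v K$, where $R_v$ is the orthogonal reflection across $v^{\perp}$, i.e.\ $R_v(x'+sv)=x'-sv$.

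Next I would translate this identification to the $L_p$-support function and the $L_p$-polar body. Since $R_v$ is a linear isometry with $|\det R_v|=1$, the change of variables $x\mapsto R_v x$ in \eqref{p-support-function} gives $h_{p,K_1}(y)=h_{p,R_v K}(y)=h_{p,K}(R_v y)$ for every $y\in\mathbb{R}^n$. Substituting this into the integral in \eqref{norm of polar body of K} with $y=y'+sv$ (so that $R_v y=y'-sv$) yields
\begin{align*}
\|y'+sv\|_{K_1^{o,p}}^{-n}
&=\frac{1}{(n-1)!}\int_0^{\infty}r^{n-1}e^{-h_{p,K}(r(y'-sv))}\,dr
=\|y'-sv\|_{K^{o,p}}^{-n}.
\end{align*}
Consequently $y'\in K_1^{o,p}(s)$ if and only if $\|y'-sv\|_{K^{o,p}}\leq 1$, i.e.\ if and only if $y'\in K^{o,p}(-s)$, and therefore $K_1^{o,p}(s)=K^{o,p}(-s)$.

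Finally, Lemma \ref{lem kops=Kop-s} gives $K^{o,p}(-s)=K^{o,p}(s)$, so combining the two equalities yields $K^{o,p}(s)=K_1^{o,p}(s)$, as desired. The only real obstacle is the accurate bookkeeping in step one — correctly identifying the chord endpoints after the shift by $\beta(x')v$ so as to recognize $K_1$ as $R_v K$; once that is in hand, the behavior of $h_{p,K}$ under the orthogonal transformation and the invocation of Lemma \ref{lem kops=Kop-s} are essentially automatic.
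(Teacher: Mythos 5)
Your argument follows essentially the same route as the paper: identify $K_1$ as the reflection of $K$ in $v^{\perp}$, deduce $h_{p,K_1}(x'+sv)=h_{p,K}(x'-sv)$, hence $\|x'+sv\|_{K_1^{o,p}}=\|x'-sv\|_{K^{o,p}}$, and conclude $K_1^{o,p}(s)=K^{o,p}(-s)$. One caveat on your final step: Lemma \ref{lem kops=Kop-s} is stated only for origin-symmetric $K$, whereas the present lemma assumes no symmetry; without symmetry the claimed equality $K^{o,p}(s)=K_1^{o,p}(s)$ is false in general (the reflection carries the slice at $s$ to the slice at $-s$), and only $K_1^{o,p}(s)=K^{o,p}(-s)$ holds. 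This is a defect of the statement rather than of your proof: the paper's own proof likewise stops at exactly $K^{o,p}(-s)=K_1^{o,p}(s)$ and never reaches the asserted conclusion, and in the corollary where the lemma is used the hypothesis $K=-K$ is imposed before Lemma \ref{lem kops=Kop-s} is invoked. So your write-up is, if anything, more complete than the paper's; just record explicitly that the last step needs $K$ to be origin symmetric.
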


\begin{proof}
Since for any $v\in S^{n-1}$,  $$K=\{x'+sv|g_{v}(x')\leq s \leq f_{v}(x'),x'\in K|v^{\bot}\},$$
and from the definition of parallel movement (\ref{parallel chord movement}), we have
$$K_{1}=\{x'+sv|-f_{v}(x')\leq s \leq -g_{v}(x'),x'\in K|v^{\bot}\}.$$
Hence, for any $x'\in v^\perp$ and $s\in\mathbb{R}$,
\begin{align}\label{hpK1=hpK}
h_{p,K_{1}}(x'+sv)&=\frac{1}{p}\log\int_{K_{1}}e^{p\langle z'+z_{n}v,x'+sv \rangle}\frac{dz'dz_{n}}{|K_{1}|}\notag\\
&=\frac{1}{p}\log\int_{K|v^{\bot}}\int^{-g_{v}(x')}_{-f_{v}(x')}e^{p\langle z',x' \rangle}e^{psz_{n}}\frac{dz'dz_{n}}{|K|}\notag\\
&=\frac{1}{p}\log\int_{K|v^{\bot}}e^{p\langle z',x' \rangle}\frac{1}{ps}[e^{ps(-g_{v}(z'))}-e^{ps(-f_{v}(z'))}]\frac{dz'}{|K|}\notag\\
&=\frac{1}{p}\log\int_{K}e^{p\langle z'+z_{n}v,x'-sv \rangle}\frac{dz'dz_{n}}{|K|}\notag\\
&=h_{p,K}(x'-sv).
\end{align}
From (\ref{norm of polar body of K}) and (\ref{hpK1=hpK}), we get
\begin{align*}
\|x'+sv\|_{K^{o,p}_{1}}
=&\Big(\frac{1}{(n-1)!}\int^{\infty}_{0}r^{n-1}e^{-h_{p,K_1}(r(x'+sv))}dr\Big)^{-\frac{1}{n}}\\
=&\Big(\frac{1}{(n-1)!}\int^{\infty}_{0}r^{n-1}e^{-h_{p,K}(r(x'-sv))}dr\Big)^{-\frac{1}{n}}\\
=&\|x'-sv\|_{K^{o,p}}.
\end{align*}
Therefore, $x'+sv\in K^{o,p}_{1}$ if and only if $x'-sv\in K^{o,p}$ , which means that $$K^{o,p}(-s)=K^{o,p}_{1}(s).$$
\end{proof}

\begin{lem}\label{Lem-p support function inequlity}
 Let $p\in(0,\infty],v\in S^{n-1}$ and $K_t$ be a parallel movement of a convex body $K \subset \mathbb{R}^{n}$ along the direction $v$. Then for any $x',y'\in v^\perp, s\in\mathbb{R}$  and $ a,b,c>0 $ with $\frac{2}{a}=\frac{1}{b}+\frac{1}{c}$, we have
$$\frac{c}{b+c}h_{p,K_{t_{1}}}(bx'+bsv)+\frac{b}{b+c}h_{p,K_{t_{2}}}(cy'+csv) \geq h_{p,K_{\frac{t_{1}+t_{2}}{2}}}(a\frac{x'+y'}{2}+asv)$$
\end{lem}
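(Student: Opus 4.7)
The plan is to reduce the inequality to a single application of Hölder's inequality, after rewriting the $L_p$-support function of $K_t$ as an integral over the fixed domain $K$.

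First I would observe that the parallel chord movement (\ref{parallel chord movement}) can be written as $K_t=\Phi_t(K)$, where $\Phi_t(x)=x+t\beta(x|v^\perp)v$. Since the shift in the $v$-direction depends only on $x|v^\perp$, the map $\Phi_t$ has Jacobian $1$, so $|K_t|=|K|$, and a change of variables in (\ref{p-support-function}) yields the representation
\[
h_{p,K_t}(w'+rv)=\frac{1}{p}\log\frac{1}{|K|}\int_{K}e^{p\langle x,\,w'+rv\rangle+ptr\,\beta(x|v^\perp)}\,dx,
\]
valid for all $w'\in v^\perp$ and $r\in\mathbb{R}$.

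Next, set $\lambda:=c/(b+c)=a/(2b)$ and $\mu:=b/(b+c)=a/(2c)$, so that $\lambda+\mu=1$; here I would invoke the hypothesis $2/a=1/b+1/c$, which is equivalent to $\lambda b=\mu c=a/2$. These identities produce the pointwise factorization
\[
e^{p\langle x,\,a\frac{x'+y'}{2}+asv\rangle+pas\frac{t_1+t_2}{2}\beta(x|v^\perp)}=\Bigl(e^{p\langle x,\,bx'+bsv\rangle+pbst_1\beta(x|v^\perp)}\Bigr)^{\lambda}\Bigl(e^{p\langle x,\,cy'+csv\rangle+pcst_2\beta(x|v^\perp)}\Bigr)^{\mu}
\]
for every $x\in K$; one simply checks that the coefficients of $\langle x,x'\rangle$, $\langle x,y'\rangle$, $\langle x,v\rangle$, and $\beta(x|v^\perp)$ match on both sides. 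Integrating over $K$ and applying Hölder's inequality on the right with conjugate exponents $1/\lambda$ and $1/\mu$, then converting back to the $L_p$-support function via the representation above (using $|K|^{\lambda+\mu}=|K|$), yields
\[
h_{p,K_{(t_1+t_2)/2}}\!\Bigl(a\tfrac{x'+y'}{2}+asv\Bigr)\le \lambda\,h_{p,K_{t_1}}(bx'+bsv)+\mu\,h_{p,K_{t_2}}(cy'+csv),
\]
which is precisely the claimed inequality. The endpoint $p=+\infty$ then follows by letting $p\to\infty$, or alternatively from the classical convexity of $h_{K_t}$ in $t$ for shadow systems.

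I do not expect a real obstacle: once the volume-preserving parameterization $\Phi_t:K\to K_t$ is in place, everything collapses to the algebraic factorization above and one invocation of Hölder. The only delicate point is the matching of weights — the harmonic-mean condition $2/a=1/b+1/c$ is exactly what forces $\lambda b=\mu c=a/2$, which in turn is what allows both the linear-in-$x$ terms and the $\beta$-terms in the exponent to combine correctly into the integrand of $h_{p,K_{(t_1+t_2)/2}}$.
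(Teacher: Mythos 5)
Your proof is correct, and it reaches the inequality by the same underlying mechanism as the paper --- a single application of H\"older's inequality to the integral defining $h_{p,K_t}$, with the harmonic-mean condition $\frac{2}{a}=\frac{1}{b}+\frac{1}{c}$ supplying exactly the weight identity $\lambda b=\mu c=\frac{a}{2}$ for $\lambda=\frac{c}{b+c}$, $\mu=\frac{b}{b+c}$ --- but your organization of the computation is genuinely different and somewhat cleaner. The paper first integrates out the $v$-direction along each chord, writing $h_{p,K_{t_i}}$ as an integral over $K|v^\perp$ of $e^{p(\cdot)\langle z',\cdot\rangle}\, J(z',\cdot)\, e^{p(\cdot) s m_i(z')}$ with $J(z',t)=\frac{2}{pst}\sinh\big(\frac{f_v-g_v}{2}pst\big)$; it then applies H\"older on $K|v^\perp$ and must separately invoke the log-convexity of $t\mapsto J(z',t)$ (asserted there without proof, and resting on $\sinh u\geq u$; the intermediate formula also divides by $pbs$, which needs a remark when $s=0$). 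Your volume-preserving parameterization $\Phi_t(x)=x+t\beta(x|v^\perp)v$ pulls everything back to the fixed body $K$, so the single $n$-dimensional H\"older step absorbs both the $(n-1)$-dimensional H\"older step and the log-convexity of $J$, the latter being nothing but H\"older applied fibrewise along each chord. Two points are worth making explicit in your write-up: justify $|K_t|=|K|$ and the change of variables by Fubini rather than by a literal Jacobian (since $\beta$ is only continuous, the correct statement is that for each fixed $x''\in v^\perp$ the fibre map $r\mapsto r+t\beta(x'')$ is a translation), and note that, exactly as in the paper, the argument uses the parallel-chord structure --- the speed must depend only on $x|v^\perp$ --- and not an arbitrary shadow system.
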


\begin{proof}
For any  $v \in S^{n-1}$, there are convex functions $ g_{v}(\cdot),-f_{v}(\cdot): K|v^\perp\ \rightarrow \mathbb{R}$, such that
$$K=\{z'+rv: g_{v}(z')\leq r \leq f_{v}(z'), z'\in K|v^\perp\}.$$

Let $m_i:=\frac{1}{2}(g_{v}+f_{v})+\beta t_i, i=1,2$, where $\beta$ is the speed function of $K_t$ and $f_{v}:=f_{v}(z'),g_{v}:=g_{v}(z')$.

From $(1.2)$, Fubini's Theorem and $(1.8)$, we have

\begin{align}\label{Lem-p support function inequlity001}
&h_{p,K_{t_{1}}}(bx'+bsv)\notag\\
=&\frac{1}{p}\log\int_{K_{t_{1}}}e^{p\langle z,bx'+bsv \rangle}\frac{dz}{|K_{t_{1}}|}\nonumber\\
=&\frac{1}{p}\log\int_{K|v^\perp\ }\int^{f_{v}+\beta t_{1}}_{g_{v}+\beta t_{1}}e^{pb\langle z',x' \rangle}e^{pbsr}\frac{drdz'}{|K|}\nonumber\\
=&\frac{1}{p}\log\int_{K|v^\perp\ }e^{pb\langle z',x' \rangle}\frac{1}{pbs}\big( e^{pbs\frac{f_{v}-g_{v}}{2}}-e^{-pbs\frac{f_{v}-g_{v}}{2}}\big)\cdot e^{pbsm_1}dz'\nonumber\\
=&\frac{1}{p}\log\int_{K|v^\perp\ }e^{pb\langle z',x' \rangle}\cdot J(z',b)\cdot e^{pbsm_1(z')}dz'
\end{align}
where
\begin{align*}
J(z',b)=\frac{1}{pbs}(e^{pbs\frac{f_{v}-g_{v}}{2}}-e^{-pbs\frac{f_{v}-g_{v}}{2}}).
\end{align*}

In the similar way we have
\begin{align}\label{Lem-p support function inequlity002}
h_{p,K_{t_{2}}}(cy'+csv)=\frac{1}{p}\log\int_{K|v^\perp\ }e^{pc\langle z',y' \rangle}\cdot J(z',c)\cdot e^{pbsm_2(z')} dz',
\end{align}
and
\begin{align}\label{Lem-p support function inequlity003}
&h_{p,K_{\frac{t_{1}+t_{2}}{2}}}(a\frac{x'+y'}{2}+asv)\notag\\
=&\frac{1}{p}\log\int_{K|v^\perp\ }e^{p\frac{a}{2}\langle z',x'+y'\rangle}\cdot J(z',a)\cdot e^{pas\frac{m_1+m_2}{2}}dz'.
\end{align}

From $(\ref{Lem-p support function inequlity001})$ and $(\ref{Lem-p support function inequlity002})$, Minkowski inequality and the fact that $\frac{2}{a}=\frac{1}{b}+\frac{1}{c} $, we have
\begin{align}\label{Lem-p support function inequlity004}
&\frac{c}{b+c}h_{p,K_{t_{1}}}(bx'+bsv)+\frac{b}{b+c}h_{p,K_{t_{2}}}(cy'+csv)\notag\\
=&\frac{1}{p}\log\Big[\big(\int_{K|v^\perp\ }e^{pb\langle z',x' \rangle}\cdot J(z',b) e^{pbsm_1}dz'\big)^{\frac{c}{b+c}}\notag\\
&\cdot\big(\int_{K|v^\perp\ }e^{pc\langle z',y' \rangle}\cdot J(z',c)\cdot e^{pbsm_2}dz'\big)^{\frac{b}{b+c}}\Big]\notag\\
\geq&\frac{1}{p}\log\int_{K|v^\perp\ }e^{p\frac{bc}{b+c}\langle z',x' \rangle}\cdot J(z',b)^{\frac{c}{b+c}}\cdot e^{\frac{pbcsm_1}{b+c}}\cdot e^{p\frac{bc}{b+c}\langle z',y' \rangle}\cdot J(z',c)^{\frac{b}{b+c}}\cdot e^{\frac{pbcsm_2}{b+c}}dz'\notag\\
=&\frac{1}{p}\log\int_{K|v^\perp\ }e^{p\frac{a}{2}\langle z',x'+y'\rangle}\cdot J(z',b)^{\frac{c}{b+c}}\cdot J(z',c)^{\frac{b}{b+c}}\cdot e^{pas\frac{m_1+m_2}{2}(z')}dz'.
\end{align}

Since $J(z',t)=\frac{2}{pst}\sinh(\frac{f_v-g_v}{2}pst)$ is log-convex in $t$, i.e.
$$J(z',b)^{\frac{c}{b+c}}\cdot J(z',c)^{\frac{b}{b+c}}\geq J(z',a),$$
from (\ref{Lem-p support function inequlity003}) and (\ref{Lem-p support function inequlity004}), we have
\begin{align*}
&\frac{c}{b+c}h_{p,K_{t_{1}}}(bx'+bsv)+\frac{b}{b+c}h_{p,K_{t_{2}}}(cy'+csv)\\
&\geq\frac{1}{p}\log\int_{K|v^\perp\ }e^{pa\langle z',\frac{x'+y'}{2} \rangle}\cdot J(z',a)\cdot e^{pas\frac{m_1+m_2}{2}}dz'\\
&=h_{p,K_{\frac{t_{1}+t_{2}}{2}}}(a\frac{x'+y'}{2}+asv)\\
\end{align*}
\end{proof}

\emph{Proof of Theorem \ref{thm-p-support function is convex1}.}
The direct conclusion of the Lemma \ref{Lem-p support function inequlity} is that
the $L_p$-support function of the shadow system $K_t$ is convex with respect to parameter $t$.
$\square$

The following well known consequence of Keith Ball will be needed.
\begin{lem}\label{keith-ball-1988}\cite[Theorem 5]{keith-ball-1988}
Let $F, G, H: (0,+\infty)\rightarrow [0,+\infty)$ be measurable functions, not almost everywhere $0$, with
$$H(r)\geq F(t)^{\frac{s}{t+s}}\cdot G(s)^{\frac{t}{t+s}},\ \ \ \frac{2}{r}=\frac{1}{s}+\frac{1}{t}.$$
Then, for $q\geq1$,
$$2\Big(\int_0^\infty r^{q-1}H(r)dr\Big)^{-\frac{1}{q}}\leq \Big(\int_0^\infty t^{q-1}F(t)dt\Big)^{-\frac{1}{q}}+\Big(\int_0^\infty s^{q-1}G(s)ds\Big)^{-\frac{1}{q}}.$$
\end{lem}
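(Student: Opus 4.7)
The plan is to prove Ball's integral inequality by transferring to an arithmetic-mean setting via an inversion, then applying a Brunn--Minkowski/Prekopa--Leindler-style argument that must accommodate the position-dependent exponents that appear.

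First, I would substitute $u = 1/t$, $v = 1/s$, $w = 1/r$, converting the harmonic-mean constraint $\tfrac{2}{r} = \tfrac{1}{s} + \tfrac{1}{t}$ into the arithmetic-mean identity $w = \tfrac{u+v}{2}$. Defining the push-forwards $\tilde F(u) := u^{-q-1} F(1/u)$ and analogously $\tilde G(v), \tilde H(w)$, the Jacobian computation yields $\int_0^\infty t^{q-1} F(t)\, dt = \int_0^\infty \tilde F(u)\, du$ and similarly for $G, H$, so the target inequality becomes the corresponding statement for $\tilde F, \tilde G, \tilde H$ against Lebesgue measure.

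Next, I would rewrite the hypothesis in the new variables. Using $s/(s+t) = u/(u+v)$, the hypothesis becomes
$$w^{q+1}\, \tilde H(w) \;\geq\; u^{(q+1)u/(u+v)}\, v^{(q+1)v/(u+v)}\, \tilde F(u)^{u/(u+v)}\, \tilde G(v)^{v/(u+v)}.$$
The weighted GM--HM inequality $u^{u/(u+v)} v^{v/(u+v)} \geq \tfrac{u+v}{2} = w$, raised to the $(q+1)$-th power, cancels the prefactor $w^{q+1}$ on the left, leaving the cleaner Prekopa--Leindler-type condition
$$\tilde H\bigl(\tfrac{u+v}{2}\bigr) \;\geq\; \tilde F(u)^{u/(u+v)}\, \tilde G(v)^{v/(u+v)}.$$
To conclude, I would use a layer-cake decomposition coupled with a one-dimensional Brunn--Minkowski argument. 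The super-level sets $A_c := \{\tilde F \geq c\}$, $B_c := \{\tilde G \geq c\}$, $C_c := \{\tilde H \geq c\}$ satisfy $\tfrac{1}{2}(A_c + B_c) \subseteq C_c$, since the weighted GM of two values both $\geq c$ is itself $\geq c$. Combined with the one-dimensional Brunn--Minkowski inequality on these super-level sets, and after a reduction to the extremal configuration in which $F, G, H$ are indicators of intervals $(0, a^*], (0, b^*], (0, c^*]$ with $c^* = \mathrm{HM}(a^*, b^*)$---where Ball's inequality becomes a direct computation on the $q$-moments matching the identity $\mathrm{HM}(a^*, b^*) = 2 a^* b^*/(a^*+b^*)$---the harmonic-mean conclusion follows.

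The main obstacle is precisely this last step. The position-dependent exponents $u/(u+v)$ fall outside the scope of the standard Prekopa--Leindler inequality, which assumes constant weights; and a naive integration in $c$ of the pointwise inclusion $\tfrac{1}{2}(A_c + B_c) \subseteq C_c$ yields only an arithmetic-mean bound of the form $M_H \geq (M_F + M_G)/2$, which is strictly weaker than the harmonic-mean bound that Ball's inequality demands. Bridging this gap requires either a careful decreasing-rearrangement argument that reduces the general case to the sharp interval configuration (where the harmonic mean appears naturally from the transport of intervals under $(s,t) \mapsto 2st/(s+t)$), or the construction of an auxiliary $(q+1)$-dimensional ``Ball body'' associated to each of $F, G, H$ on which classical Brunn--Minkowski applies directly, thereby converting the weighted GM--HM structure of the hypothesis into a genuine volume inequality in $\mathbb{R}^{q+1}$.
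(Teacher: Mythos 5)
First, a point of reference: the paper does not prove this lemma at all --- it is quoted verbatim as Theorem 5 of Ball's 1988 paper --- so there is no in-paper argument to compare against, and your proposal must stand on its own. Its first half does: the inversion $u=1/t$, $v=1/s$, $w=1/r$ with $\tilde F(u)=u^{-q-1}F(1/u)$ preserves the three integrals, and the self-weighted GM--HM inequality $u^{u/(u+v)}v^{v/(u+v)}\ge \tfrac{u+v}{2}=w$ is correct and does let you discard the prefactor, yielding $\tilde H\bigl(\tfrac{u+v}{2}\bigr)\ge \tilde F(u)^{u/(u+v)}\tilde G(v)^{v/(u+v)}$.

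The endgame, however, is a genuine gap --- you acknowledge as much --- and your diagnosis of it is partly backwards. The arithmetic-mean bound $\int\tilde H\ge\tfrac12\bigl(\int\tilde F+\int\tilde G\bigr)$ is not ``strictly weaker'' than what is needed: by the power-mean inequality ($M_1\ge M_{-1/q}$ for $q\ge1$) it would immediately imply $2\bigl(\int\tilde H\bigr)^{-1/q}\le\bigl(\int\tilde F\bigr)^{-1/q}+\bigl(\int\tilde G\bigr)^{-1/q}$. The real problem is that this bound is \emph{false}: for $F=\mathbf 1_{(0,a]}$, $G=\mathbf 1_{(0,b]}$, $H=\mathbf 1_{(0,2ab/(a+b)]}$ (which satisfy the hypothesis) the three integrals are $a^q/q$, $b^q/q$, $(2ab/(a+b))^q/q$, and $(2ab/(a+b))^q<\tfrac12(a^q+b^q)$ once $a\ll b$. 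The layer-cake argument cannot produce it anyway, because $\tfrac12(A_c+B_c)\subseteq C_c$ is vacuous on the whole range of levels $c$ where one of $A_c,B_c$ is empty, which is exactly where the arithmetic mean overshoots. Of your two proposed repairs, the ``Ball body in $\mathbb R^{q+1}$'' is circular (the convexity of those bodies is what this lemma exists to prove), and the rearrangement reduction is not justified since rearranging $F$ and $G$ separately need not preserve the coupled hypothesis. The missing idea in Ball's actual proof is a one-dimensional transport in the \emph{original} variables: set $A=\bigl(\int_0^\infty t^{q-1}F\bigr)^{1/q}$, $B=\bigl(\int_0^\infty s^{q-1}G\bigr)^{1/q}$, define increasing $u,w:(0,1)\to(0,\infty)$ by $\int_0^{u(x)}t^{q-1}F\,dt=xA^q$ and $\int_0^{w(x)}s^{q-1}G\,ds=xB^q$, and put $r=2uw/(u+w)$. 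Two applications of the weighted AM--GM inequality with the position-dependent weights $\theta=w/(u+w)$, $1-\theta=u/(u+w)$ --- once to get $r'\ge (u')^{\theta}(w')^{1-\theta}\,2\,u^{1-\theta}w^{\theta}/(u+w)$ from $2r'/r^2=u'/u^2+w'/w^2$, and once in the form GM $\ge$ HM to get $(Aw)^{\theta}(Bu)^{1-\theta}\ge(u+w)/(A^{-1}+B^{-1})$ --- show that $r^{q-1}H(r(x))\,r'(x)\ge\bigl(2AB/(A+B)\bigr)^q$ for a.e.\ $x$, and integrating over $(0,1)$ gives the claim. It is precisely this transport that absorbs the variable exponents; your reduction to the arithmetic-mean constraint does not bring you any closer to it.
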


\begin{lem}\label{lem-section of Kt1-Kt2-subset Kt12}
Let $p\in(0,\infty],v\in S^{n-1}$ and $K_t$ be a parallel chord movement of a convex body $K$ along the direction $v$. Then for any $s,t_i\in \mathbb{R},i=1,2.$
$$\frac{1}{2}K^{o,p}_{t_{1}}(s)+\frac{1}{2}K^{o,p}_{t_{2}}(s)\subset K^{o,p}_{\frac{t_{1}+t_{2}}{2}}(s).$$
\end{lem}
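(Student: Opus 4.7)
\medskip

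\noindent\textbf{Proof plan for Lemma \ref{lem-section of Kt1-Kt2-subset Kt12}.}
The plan is to pick an arbitrary point of $\tfrac12 K^{o,p}_{t_1}(s)+\tfrac12 K^{o,p}_{t_2}(s)$ and verify its membership in $K^{o,p}_{(t_1+t_2)/2}(s)$ by combining Lemma~\ref{Lem-p support function inequlity} with Ball's integral inequality (Lemma~\ref{keith-ball-1988}). Concretely, take $x'\in K^{o,p}_{t_1}(s)$ and $y'\in K^{o,p}_{t_2}(s)$ and set
\[
z_1=x'+sv,\qquad z_2=y'+sv,\qquad z=\tfrac{z_1+z_2}{2}=\tfrac{x'+y'}{2}+sv,
\]
so that by definition of the sections $\|z_1\|_{K^{o,p}_{t_1}}\le 1$ and $\|z_2\|_{K^{o,p}_{t_2}}\le 1$; the goal reduces to showing $\|z\|_{K^{o,p}_{(t_1+t_2)/2}}\le 1$.

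First I would reformulate Lemma~\ref{Lem-p support function inequlity}: since $bx'+bsv=bz_1$, $cy'+csv=cz_2$ and $a\tfrac{x'+y'}{2}+asv=az$, and since $\tfrac{c}{b+c}$ together with $\tfrac{b}{b+c}$ are exactly the weights $\tfrac{s}{s+t},\tfrac{t}{s+t}$ appearing in Ball's hypothesis (under the matching $a\leftrightarrow r,\ b\leftrightarrow t,\ c\leftrightarrow s$), Lemma~\ref{Lem-p support function inequlity} yields, after exponentiating the negative,
\[
e^{-h_{p,K_{(t_1+t_2)/2}}(az)}\;\ge\;\bigl(e^{-h_{p,K_{t_1}}(bz_1)}\bigr)^{\frac{c}{b+c}}\bigl(e^{-h_{p,K_{t_2}}(cz_2)}\bigr)^{\frac{b}{b+c}},\qquad \tfrac{2}{a}=\tfrac{1}{b}+\tfrac{1}{c}.
\]
Defining $F(b)=e^{-h_{p,K_{t_1}}(bz_1)}$, $G(c)=e^{-h_{p,K_{t_2}}(cz_2)}$, $H(a)=e^{-h_{p,K_{(t_1+t_2)/2}}(az)}$, this is precisely the pointwise hypothesis of Lemma~\ref{keith-ball-1988}.

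Now I would apply Ball's lemma with $q=n$. Multiplying through by $((n-1)!)^{1/n}$ and recognizing each factor as the defining integral of $\|\cdot\|_{K^{o,p}_{*}}$ in (\ref{norm of polar body of K}), the output is the key ``convex-combination'' bound
\[
\Bigl\|\tfrac{z_1+z_2}{2}\Bigr\|_{K^{o,p}_{(t_1+t_2)/2}}\;\le\;\tfrac12\|z_1\|_{K^{o,p}_{t_1}}+\tfrac12\|z_2\|_{K^{o,p}_{t_2}}\;\le\;1.
\]
This exactly says $z\in K^{o,p}_{(t_1+t_2)/2}$, i.e.\ $\tfrac{x'+y'}{2}\in K^{o,p}_{(t_1+t_2)/2}(s)$, finishing the proof.

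The only genuine checking is bookkeeping: verifying that the weights $\tfrac{c}{b+c},\tfrac{b}{b+c}$ produced by Lemma~\ref{Lem-p support function inequlity} line up with the Ball weights $\tfrac{s}{s+t},\tfrac{t}{s+t}$ under the correct renaming of variables, and that the harmonic-mean constraint $\tfrac{2}{a}=\tfrac{1}{b}+\tfrac{1}{c}$ matches Ball's hypothesis $\tfrac{2}{r}=\tfrac{1}{s}+\tfrac{1}{t}$. Beyond that there is no substantive obstacle; Lemmas~\ref{Lem-p support function inequlity} and \ref{keith-ball-1988} do all the work, the first providing the pointwise log-convexity input, the second turning it into the desired norm (hence set-inclusion) statement.
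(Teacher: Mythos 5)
Your proposal is correct and follows essentially the same route as the paper's own proof: exponentiate Lemma~\ref{Lem-p support function inequlity} to get the pointwise hypothesis of Ball's inequality, apply Lemma~\ref{keith-ball-1988} with $q=n$, and read off the norm inequality $\|\tfrac{z_1+z_2}{2}\|_{K^{o,p}_{(t_1+t_2)/2}}\le\tfrac12\|z_1\|_{K^{o,p}_{t_1}}+\tfrac12\|z_2\|_{K^{o,p}_{t_2}}\le 1$ from the definition \eqref{norm of polar body of K}. The variable matching $a\leftrightarrow r$, $b\leftrightarrow t$, $c\leftrightarrow s$ and the weights $\tfrac{c}{b+c},\tfrac{b}{b+c}$ line up exactly as you claim, so there is nothing to add.
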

\begin{proof}
For any $x'+sv\in K^{o,p}_{t_{1}} ,y'+sv\in K^{o,p}_{t_{2}} $, and $a,b,c\in\mathbb{R}$ with $\frac{2}{a}=\frac{1}{b}+\frac{1}{c}$, from Lemma \ref{Lem-p support function inequlity}, we have
\begin{align}\label{exp-hpKt1t2--leq---}
e^{-h_{p,K_{\frac{t_{1}+t_{2}}{2}}}(a\frac{x'+y'}{2}+asv)}
\geq\Big(e^{-h_{p,K_{t_{1}}}(bx'+bsv)}\Big)^{\frac{c}{b+c}}
\cdot\Big(e^{-h_{p,K_{t_{2}}}(cy'+csv)}\Big)^{\frac{b}{b+c}}.\notag\\
\end{align}
From (\ref{norm of polar body of K}), (\ref{exp-hpKt1t2--leq---}) and Lemma \ref{keith-ball-1988}, we get
\begin{align*}
&\|\frac{x'+y'}{2}+sv \|_{K^{o,p}_{\frac{t_{1}+t_{2}}{2}}}\\
=&\Big(\frac{1}{(n-1)!}\int^{\infty}_{0}a^{n-1}e^{-h_{p,K_{\frac{t_{1}+t_{2}}{2}}}(a\frac{x'+y'}{2}+asv)}da \Big)^{-\frac{1}{n}}\\
\leq& \frac{1}{2}\Big(\frac{1}{(n-1)!}\int^{\infty}_{0}b^{n-1}e^{-h_{p,K_{t_{1}}}(bx'+bsv)}db\Big)^{-\frac{1}{n}}\\
&+\frac{1}{2}\Big(\frac{1}{(n-1)!}\int^{\infty}_{0}c^{n-1}e^{-h_{p,K_{t_{2}}}(cy'+csv)}dc\Big)^{-\frac{1}{n}}\\
=&\frac{1}{2}\|x'+sv\|_{K^{o,p}_{t_{1}}}+\frac{1}{2}\|y'+sv\|_{K^{o,p}_{t_{2}}}\leq1.
\end{align*}
Hence  $$\frac{x'+y'}{2}+sv \in  K^{o,p}_{\frac{t_{1}+t_{2}}{2}},$$
which means that
$$\frac{1}{2}K^{o,p}_{t_{1}}(s)+\frac{1}{2}K^{o,p}_{t_{2}}(s)\subset K^{o,p}_{\frac{t_{1}+t_{2}}{2}}(s).$$
\end{proof}

\emph{Proof of Theorem \ref{thm-main thm01}.}
It is obvious that the inequality (\ref{slice vulume inequality for p polar body of ti}) follows from the Brunn-Minkowski inequality and
Lemma \ref{lem-section of Kt1-Kt2-subset Kt12}.
$\square$

\begin{corr}
Let $p\in(0,+\infty]$, $v\in S^{n-1}$ and $K_t$ be a parallel chord movement of a convex body $K$ with speed function $\beta(\cdot)=-\big(g_v(\cdot)+ f_v(\cdot)\big)$ along the direction $v$. Then for all $s\in\mathbb{R}$,
\begin{align}\label{slice vulume inequality for p polar body of 0--1}
\frac{1}{2}K^{o,p}(s)+\frac{1}{2}K_{1}^{o,p}(s)\subset K_{\frac{1}{2}}^{o,p}(s)=(S_vK)^{o,p}(s).
\end{align}
In particular, for $K=-K$,
\begin{align}\label{steiner symmetrical inequality}
|(S_vK)^{o,p}|\geq |K^{o,p}|.
\end{align}
\end{corr}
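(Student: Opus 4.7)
The plan is to read off both claims directly from the results already established in the section, exploiting the fact that the choice of speed function $\beta(\cdot) = -(g_v(\cdot) + f_v(\cdot))$ makes $K_0 = K$, $K_1 = K^v$, and $K_{1/2} = S_v K$, as recorded in the discussion just before Lemma \ref{lem kops=Kop-s}.

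For the set-theoretic inclusion (\ref{slice vulume inequality for p polar body of 0--1}), I would simply apply Lemma \ref{lem-section of Kt1-Kt2-subset Kt12} with $t_1 = 0$ and $t_2 = 1$; the identifications above make the conclusion of that lemma coincide word-for-word with (\ref{slice vulume inequality for p polar body of 0--1}).

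For the volume inequality (\ref{steiner symmetrical inequality}) under the extra hypothesis $K = -K$, I would argue slicewise and then integrate. Applying the Brunn--Minkowski inequality in the $(n-1)$-dimensional hyperplane $v^\perp + sv$ to the inclusion just obtained---equivalently, invoking Theorem \ref{thm-main thm01} with $(t_1,t_2)=(0,1)$---yields
$$|K_{\frac{1}{2}}^{o,p}(s)|^{\frac{1}{n-1}} \geq \tfrac{1}{2}|K^{o,p}(s)|^{\frac{1}{n-1}} + \tfrac{1}{2}|K_{1}^{o,p}(s)|^{\frac{1}{n-1}}.$$
Next I would identify the two slices on the right-hand side. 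Lemma \ref{lem K1s=Ks} relates $K_{1}^{o,p}(s)$ to a slice of $K^{o,p}$, and the assumption $K = -K$ combined with Lemma \ref{lem kops=Kop-s} forces $|K_{1}^{o,p}(s)| = |K^{o,p}(s)|$. The displayed inequality then collapses to $|K_{\frac{1}{2}}^{o,p}(s)| \geq |K^{o,p}(s)|$ for every $s \in \mathbb{R}$, and integrating over $s$ via Fubini gives $|(S_v K)^{o,p}| \geq |K^{o,p}|$.

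No genuine obstacle is expected, since all the heavy lifting---the slice inclusion, the behavior of slices under reflection, and the consequence of central symmetry of $K$---has already been packaged in Lemmas \ref{lem kops=Kop-s}, \ref{lem K1s=Ks}, and \ref{lem-section of Kt1-Kt2-subset Kt12}. The only subtle point worth noting is that it is the symmetry of the original body $K$, not of its $L_p$ polar $K^{o,p}$ (which need not be centrally symmetric), that activates Lemma \ref{lem kops=Kop-s}.
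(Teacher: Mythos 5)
Your proposal is correct and follows essentially the same route as the paper: the inclusion comes from Lemma \ref{lem-section of Kt1-Kt2-subset Kt12} with $(t_1,t_2)=(0,1)$, and the volume inequality from Theorem \ref{thm-main thm01} together with Lemmas \ref{lem K1s=Ks} and \ref{lem kops=Kop-s} (the latter activated by $K=-K$), followed by integration over $s$. Your closing remark correctly pinpoints the only delicate point, namely that the symmetry hypothesis is on $K$ itself, exactly as the paper's (much terser) proof implicitly requires.
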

\begin{proof}
Taking $t_1=0$ and $t_2=1$ in Lemma \ref{lem-section of Kt1-Kt2-subset Kt12} and Theorem \ref{thm-main thm01}, it is easily to obtain (\ref{slice vulume inequality for p polar body of 0--1}) and (\ref{steiner symmetrical inequality}) follows from (\ref{slice vulume inequality for p polar body of 0--1}), Lemma \ref{lem K1s=Ks} and Lemma \ref{lem kops=Kop-s}.
\end{proof}

\begin{rem}
We should point out that the above inequality (\ref{steiner symmetrical inequality}) has been proven in \cite{Berndtsson-Mastrantonis-Rubinstein-2023}.
\end{rem}

The following $L_p$ Blaschke-Santal\'{o} inequality follows from (\ref{steiner symmetrical inequality}), (\ref{volume of p-polar body}), (\ref{p-mahler volume}) and the fact that repeated Steiner symmetrizations converge to a dilated Euclidean standard ball.

\begin{corr}\cite{Berndtsson-Mastrantonis-Rubinstein-2023}
Let $p\in(0,+\infty]$. For a symmetric convex body $K\subset \mathbb{R}^n$,
\begin{align}\label{p-blaschke santalo inequality}
\mathcal{M}_p(K)\leq \mathcal{M}_p(B_2^n),
\end{align}
where $B_2^n=\{x\in\mathbb{R}^n:|x_1|^2+\cdots+|x_n|^2\leq1\}$.
\end{corr}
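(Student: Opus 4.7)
The plan is to deduce the inequality from the Steiner symmetrization inequality (\ref{steiner symmetrical inequality}) by iterating it and passing to a ball in the limit. From (\ref{volume of p-polar body}) and (\ref{p-mahler volume}) one has the identity $\mathcal{M}_p(K)=n!\,|K|\,|K^{o,p}|$. Since the Steiner symmetrization $S_v K$ has the same volume as $K$ and, when $K=-K$, is again origin symmetric (the sections of $K$ in the direction $v$ have a width function that is invariant under $x'\mapsto -x'$, as $f_v(-x')=-g_v(x')$), the inequality (\ref{steiner symmetrical inequality}) upgrades immediately to
\begin{equation*}
\mathcal{M}_p(S_vK)\;=\;n!\,|S_vK|\,|(S_vK)^{o,p}|\;\geq\;n!\,|K|\,|K^{o,p}|\;=\;\mathcal{M}_p(K).
\end{equation*}

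Next I would invoke the classical theorem of Gross that, for any convex body $K$, there exists a sequence of directions $v_1,v_2,\ldots\in S^{n-1}$ so that the iterates $K_m:=S_{v_m}\cdots S_{v_1}K$ converge in Hausdorff distance to a Euclidean ball $B=rB_2^n$ with $|B|=|K|$. Each $K_m$ inherits central symmetry and, by induction on the previous display, satisfies $\mathcal{M}_p(K_m)\geq\mathcal{M}_p(K)$. Passing to the limit would then yield $\mathcal{M}_p(B)\geq\mathcal{M}_p(K)$, and the proof closes via the scale-invariance of $\mathcal{M}_p$: a change of variables in (\ref{p-support-function}) gives $h_{p,rK}(y)=h_{p,K}(ry)$, so that
\begin{equation*}
\mathcal{M}_p(rK)\;=\;r^n|K|\int_{\mathbb{R}^n} e^{-h_{p,K}(ry)}dy\;=\;|K|\int_{\mathbb{R}^n} e^{-h_{p,K}(u)}du\;=\;\mathcal{M}_p(K),
\end{equation*}
hence $\mathcal{M}_p(B)=\mathcal{M}_p(B_2^n)$.

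The delicate step I expect to be the main obstacle is justifying the continuity $\mathcal{M}_p(K_m)\to\mathcal{M}_p(B)$ under Hausdorff convergence. Volume convergence $|K_m|\to|B|$ is standard, but the integral $\int_{\mathbb{R}^n}e^{-h_{p,K_m}}dx$ is over a non-compact domain, so I would need pointwise convergence $h_{p,K_m}\to h_{p,B}$ (which follows since Hausdorff convergence of the domains $K_m$ implies $\int_{K_m} e^{p\langle x,y\rangle}dx\to \int_{B}e^{p\langle x,y\rangle}dx$ for each $y$) together with a uniform integrable lower bound on $h_{p,K_m}(x)$ for $|x|$ large. Such a bound follows because, for $m$ sufficiently large, all $K_m$ contain a common ball $\rho B_2^n$, which forces $h_{p,K_m}(x)\geq h_{p,\rho B_2^n}(x)$, a function with superlinear growth enabling dominated convergence. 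Once that uniform tail estimate is in hand, the three steps above combine to give $\mathcal{M}_p(K)\leq\mathcal{M}_p(B_2^n)$.
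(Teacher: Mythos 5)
Your proof takes exactly the route of the paper, which derives the corollary in one sentence from (\ref{steiner symmetrical inequality}), the identity $\mathcal{M}_p(K)=n!\,|K|\,|K^{o,p}|$ coming from (\ref{volume of p-polar body}) and (\ref{p-mahler volume}), and the convergence of iterated Steiner symmetrizations to a dilated ball; your write-up simply supplies the details (preservation of symmetry and volume, scale invariance, continuity of $\mathcal{M}_p$) that the paper omits. One micro-correction: $K_m\supset\rho B_2^n$ does not by itself give $h_{p,K_m}\geq h_{p,\rho B_2^n}$, because $h_{p,K}$ in (\ref{p-support-function}) is normalized by $|K|$ and is not monotone under inclusion (a long thin symmetric body containing $\rho B_2^n$ can have smaller $L_p$-support in an orthogonal direction than the ball does); the uniform tail bound you actually need still holds, since the cap $\{x\in\rho B_2^n:\langle x,y\rangle\geq\rho|y|/2\}\subset K_m$ has volume at least $c_n\rho^n$ and all $K_m$ have volume $|K|$, whence $h_{p,K_m}(y)\geq\frac{\rho|y|}{2}-\frac{1}{p}\log\frac{|K|}{c_n\rho^n}$ and dominated convergence applies.
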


\begin{rem}
We want to point out that the above inequality (\ref{p-blaschke santalo inequality}) in \cite{Berndtsson-Mastrantonis-Rubinstein-2023} is stated for symmetric bodies and we can get a different upper bound for $\mathcal{M}_p(K)$ of convex body $K$, which may not be symmetric.
\end{rem}

We need the following Lemma which is classical and can be found in \cite[Lemma 3]{meyer-werner-1998} or in \cite[Lemma 2.5]{fradelizi-arxiv-202307}.

\begin{lem}\label{lem K-z integral volume fomula}
Let $K \subset \mathbb{R}^{n}$ be a convex body with $o, z\in int K$.
Then
$$|(K-z)^o|=\int_{K^o}\frac{dx}{(1-\langle z,x\rangle)^{n+1}}.$$
\end{lem}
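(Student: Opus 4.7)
The plan is to convert both sides to integrals over the sphere $S^{n-1}$ via polar coordinates and show they agree pointwise in the direction $u$.

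First, I would rewrite the left-hand side using the standard star-body volume formula $|L|=\frac{1}{n}\int_{S^{n-1}}\rho_L(u)^n du$, which is available here since $o\in \operatorname{int}(K-z)$ because $z\in\operatorname{int}K$. The radial function of $(K-z)^o$ is the reciprocal of the support function of $K-z$, i.e.\ $\rho_{(K-z)^o}(u)=1/(h_K(u)-\langle z,u\rangle)$, giving
$$|(K-z)^o|=\frac{1}{n}\int_{S^{n-1}}\frac{du}{(h_K(u)-\langle z,u\rangle)^n}.$$

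Second, I would rewrite the right-hand side by introducing polar coordinates $x=ru$ with $u\in S^{n-1}$ and $r\in[0,\rho_{K^o}(u)]=[0,1/h_K(u)]$:
$$\int_{K^o}\frac{dx}{(1-\langle z,x\rangle)^{n+1}}=\int_{S^{n-1}}\int_0^{1/h_K(u)}\frac{r^{n-1}}{(1-r\langle z,u\rangle)^{n+1}}\,dr\,du.$$

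Third, I would evaluate the inner radial integral. Fix $u\in S^{n-1}$ and abbreviate $a=\langle z,u\rangle$, $R=1/h_K(u)$. Under the substitution $s=r/(1-ar)$, a short computation gives $r=s/(1+as)$, $dr=ds/(1+as)^2$, and $1-ar=1/(1+as)$; these combine to the remarkable identity
$$\frac{r^{n-1}\,dr}{(1-ar)^{n+1}}=s^{n-1}\,ds.$$
The upper limit $r=R$ transforms to $s=R/(1-aR)=1/(h_K(u)-\langle z,u\rangle)$, so the inner integral equals $\frac{1}{n}(h_K(u)-\langle z,u\rangle)^{-n}$. Inserting this into the previous display reproduces the formula for $|(K-z)^o|$ from the first step.

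The only real subtlety is making sure the substitution is legitimate and all integrals converge, which is precisely where the hypothesis $z\in\operatorname{int}K$ enters: it guarantees $h_K(u)-\langle z,u\rangle>0$ uniformly over $u\in S^{n-1}$, so the denominators never vanish and the change of variables $s=r/(1-ar)$ is monotone on the relevant range. This is the one place I expect any care at all, but it is a standard compactness argument and does not pose a genuine obstacle.
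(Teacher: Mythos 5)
Your argument is correct: the radial-function formula $\rho_{(K-z)^o}=1/(h_K-\langle z,\cdot\rangle)$, the polar-coordinate rewriting of the right-hand side, and the substitution $s=r/(1-ar)$ (whose monotonicity and the positivity of all denominators are exactly guaranteed by $z\in\operatorname{int}K$, as you note) all check out, and the inner integral does evaluate to $\tfrac{1}{n}(h_K(u)-\langle z,u\rangle)^{-n}$. For comparison: the paper itself gives no proof of this lemma, citing it as classical from Meyer--Werner and from Fradelizi et al., so there is no in-paper argument to match yours against. The proof usually given in those sources is a global version of your radial substitution: the projective map $T(x)=x/(1-\langle z,x\rangle)$ sends $K^o$ bijectively onto $(K-z)^o$ (since $\langle T(x),w-z\rangle\le 1$ for all $w\in K$ is equivalent to $\langle x,w\rangle\le 1$), and its Jacobian determinant is $(1-\langle z,x\rangle)^{-(n+1)}$, so the identity is a one-line change of variables. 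Your route is that same change of variables performed ray by ray after splitting off the spherical integration; it is slightly longer but entirely elementary and avoids computing an $n$-dimensional Jacobian, so it is a perfectly acceptable self-contained substitute for the citation.
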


\begin{lem}\label{lem Ko-Kop-Ko}\cite[Remark 3.14]{Berndtsson-Mastrantonis-Rubinstein-2023}
Let $p\in(0,\infty]$ and $K \subset \mathbb{R}^{n}$ be a convex body with $bar(K)=o$.
Then
$$K^o\subset K^{o,p}\subset  \frac{(1+p)^{1+\frac{1}{p}}}{p}K^o.$$
\end{lem}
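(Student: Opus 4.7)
The plan is to establish the two inclusions separately by sandwiching $h_{p,K}$ between $h_{K}$ from above and a shifted scalar multiple of $h_{K}$ from below.

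The left inclusion $K^{o}\subset K^{o,p}$ will be the easy half. Since $\langle x,y\rangle\leq h_{K}(y)$ for every $x\in K$, one has $\int_{K}e^{p\langle x,y\rangle}\,dx/|K|\leq e^{ph_{K}(y)}$ and hence $h_{p,K}(y)\leq h_{K}(y)$ for all $y$. Substituting into (\ref{norm of polar body of K}) and evaluating the gamma integral $\int_{0}^{\infty}r^{n-1}e^{-rh_{K}(y)}\,dr=(n-1)!/h_{K}(y)^{n}$ yields $\|y\|_{K^{o,p}}\leq h_{K}(y)=\|y\|_{K^{o}}$, giving $K^{o}\subset K^{o,p}$.

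For the right inclusion $K^{o,p}\subset\tfrac{(1+p)^{1+1/p}}{p}K^{o}$ I need the matching lower bound $\|y\|_{K^{o,p}}\geq\tfrac{p}{(1+p)^{1+1/p}}h_{K}(y)$. I would normalize so that $h_{K}(y)=1$ and introduce the section function $f(s):=|K\cap\{\langle\cdot,y\rangle=s\}|$ on $[-a,1]$, where $a=h_{K}(-y)$. Brunn's theorem makes $f^{1/(n-1)}$ concave, and the hypothesis $bar(K)=o$ reads $\int_{-a}^{1}sf(s)\,ds=0$. This reduces the question to a one-dimensional extremal problem: find the infimum, uniformly in $r>0$, of the Laplace transform $\int_{-a}^{1}e^{prs}f(s)\,ds/\int_{-a}^{1}f(s)\,ds$ over $1/(n-1)$-concave densities $f$ of mean zero on a bounded interval. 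The candidate extremizer is the ``truncated cone'' density $f(s)=C(1-s)^{n-1}\mathbf{1}_{[-1/n,1]}(s)$, which arises from $K$ being a simplex whose extreme vertex in the direction $y$ sits at $s=1$ and whose opposite facet sits at $s=-1/n$ (forcing $bar(K)=o$, as a short moment computation confirms). Plugging this extremizer back into the definition of $\|y\|_{K^{o,p}}$ produces, after an explicit incomplete-gamma computation, the sharp constant $(1+p)^{1+1/p}/p$.

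The hard part will be justifying the extremality of the cone density: proving that, among $1/(n-1)$-concave densities of mean zero on any bounded interval, the one-sided cone minimizes the moment generating function pointwise for every positive argument. A natural route is a rearrangement/bathtub argument combined with Jensen-type inequalities that preserve both the concavity class and the barycenter constraint; alternatively one can attempt a localization argument in the style of Fradelizi--Guédon. Since the statement is recorded as Remark 3.14 of \cite{Berndtsson-Mastrantonis-Rubinstein-2023}, one may also simply appeal to their proof.
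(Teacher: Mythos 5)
The paper itself gives no proof of this lemma --- it is quoted verbatim from \cite[Remark 3.14]{Berndtsson-Mastrantonis-Rubinstein-2023} --- so the comparison is really between your sketch and the argument in that reference. Your first half is fine: $h_{p,K}\leq h_K$ follows from $\langle x,y\rangle\leq h_K(y)$ on $K$, and the gamma integral then gives $\|y\|_{K^{o,p}}\leq h_K(y)=\|y\|_{K^o}$, i.e.\ $K^o\subset K^{o,p}$ (note this half does not even use $bar(K)=o$).

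The second half has a genuine gap, and moreover is aimed at a harder problem than the one that needs solving. You reduce to minimizing the Laplace transform over $\tfrac{1}{n-1}$-concave mean-zero densities and then explicitly defer the key step (``the hard part will be justifying the extremality of the cone density''), so as written nothing is proved. Worse, the claim that plugging in the cone density returns exactly the constant $(1+p)^{1+1/p}/p$ is unsubstantiated and almost certainly false: that constant is not the sharp value of any extremal problem over bodies, but an artifact of a crude Jensen-type estimate, so it will not match an exact incomplete-gamma evaluation at the simplex. The intended argument is much shorter and avoids any extremal-body analysis. Fix $y$ with $h_K(y)=1$, let $x_0\in K$ satisfy $\langle x_0,y\rangle=1$, and for $\lambda\in(0,1)$ set $K_\lambda:=(1-\lambda)x_0+\lambda K\subset K$. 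Then $|K_\lambda|=\lambda^n|K|$, and since $bar(K)=o$ the average of $\langle\cdot,y\rangle$ over $K_\lambda$ is $1-\lambda$; Jensen's inequality gives
\begin{align*}
\int_K e^{p\langle x,y\rangle}\frac{dx}{|K|}\;\geq\;\lambda^n\int_{K_\lambda}e^{p\langle x,y\rangle}\frac{dx}{|K_\lambda|}\;\geq\;\lambda^n e^{p(1-\lambda)},
\end{align*}
hence $h_{p,K}(y)\geq(1-\lambda)h_K(y)+\tfrac{n}{p}\log\lambda$ for all $y$ by homogeneity. Feeding this into (\ref{norm of polar body of K}) yields $\|y\|_{K^{o,p}}\geq\lambda^{1/p}(1-\lambda)\,h_K(y)$, and the optimal choice $\lambda=\tfrac{1}{1+p}$ gives $\lambda^{1/p}(1-\lambda)=\tfrac{p}{(1+p)^{1+1/p}}$, which is exactly the right inclusion. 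If you want to keep your one-dimensional reduction, you would need both a proof that a single density minimizes the moment generating function simultaneously for all $r>0$ (not obvious, and possibly false) and an exact evaluation showing the minimum equals the stated constant (it does not); I recommend replacing that half by the dilation-plus-Jensen argument above or by a direct citation of the proof in \cite{Berndtsson-Mastrantonis-Rubinstein-2023}.
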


\begin{prop}\label{thm-sharp-blaschke-santalo-inequality}
Let $p\in(0,\infty]$ and $K \subset \mathbb{R}^{n}$ be a convex body with $bar(K)=o$. Then
$$\mathcal{M}_p(K)\leq n!\Big(\frac{(1+p)^{1+\frac{1}{p}}}{p}\Big)^n|B_2^n|^2.$$
\end{prop}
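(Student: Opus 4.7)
The approach is a direct three-line assembly from tools already available in the excerpt. First, I would rewrite the $L_p$ Mahler volume as a product of $|K|$ and $|K^{o,p}|$. Indeed, combining the definition (1.7) of $\mathcal{M}_p$ with the volume formula (1.6), one has
\[
\mathcal{M}_p(K) \;=\; |K|\int_{\mathbb{R}^n} e^{-h_{p,K}(x)}\,dx \;=\; n!\,|K|\,|K^{o,p}|.
\]

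Next I would invoke Lemma \ref{lem Ko-Kop-Ko}, which gives the inclusion $K^{o,p}\subset \frac{(1+p)^{1+1/p}}{p}K^o$ under the hypothesis $\mathrm{bar}(K)=o$. Taking volumes and using homogeneity yields
\[
|K^{o,p}| \;\leq\; \Big(\tfrac{(1+p)^{1+\frac{1}{p}}}{p}\Big)^n |K^o|.
\]
Substituting this into the previous identity gives
\[
\mathcal{M}_p(K) \;\leq\; n!\,\Big(\tfrac{(1+p)^{1+\frac{1}{p}}}{p}\Big)^n |K|\,|K^o|.
\]

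Finally, because $\mathrm{bar}(K)=o$, the classical Blaschke-Santal\'o inequality (its general non-symmetric version, valid when the center of polarity is the centroid) gives $|K||K^o|\leq |B_2^n|^2$, which immediately yields the claimed bound. There is no real obstacle here: the main content was absorbed in the preceding Lemmas \ref{lem Ko-Kop-Ko} and in the centroid case of Blaschke-Santal\'o, both of which the paper treats as known. The only thing worth flagging is that one must cite the non-symmetric Blaschke-Santal\'o (with the centroid as center of polarity) rather than the symmetric $L_p$ version Theorem A, since $K$ is not assumed symmetric; this is exactly the point of the remark preceding the proposition, which motivates stating the bound in this form.
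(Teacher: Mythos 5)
Your proof is correct, and it reaches the same bound by a genuinely shorter route than the paper. Both arguments start from $\mathcal{M}_p(K)=n!\,|K|\,|K^{o,p}|$ and both ultimately rest on the right-hand inclusion of Lemma \ref{lem Ko-Kop-Ko}, but you apply it directly as $|K^{o,p}|\leq\big(\tfrac{(1+p)^{1+1/p}}{p}\big)^n|K^o|$ and then invoke the classical Blaschke--Santal\'o inequality for $K$ polarized at its centroid. The paper instead dualizes the inclusion to $(K^{o,p})^{o}\supset\tfrac{p}{(1+p)^{1+1/p}}K$, uses the integral representation of $|(K^{o,p}-z)^o|$ from Lemma \ref{lem K-z integral volume fomula} together with Jensen's inequality to get $|(K^{o,p}-z)^o|\geq\big(\tfrac{p}{(1+p)^{1+1/p}}\big)^n|K|$ for every $z\in \mathrm{int}\,K^{o,p}$, and then applies Blaschke--Santal\'o to $K^{o,p}$ at its Santal\'o point. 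These are two faces of the same classical inequality: the centroid version you cite follows from the Santal\'o-point version by bipolarity (if $\mathrm{bar}(K)=o$ then $o$ is the Santal\'o point of $K^o$, so $|K^o|\,|(K^o)^o|=|K||K^o|\leq|B_2^n|^2$), so your citation is legitimate, though you should state this reduction or give a precise reference since the paper only ever invokes the Santal\'o-point form. What your route buys is the elimination of Lemma \ref{lem K-z integral volume fomula} and the Jensen step entirely; what the paper's route buys is that it never needs the centroid formulation and, as a byproduct, establishes the lower volume bound $|(K^{o,p}-z)^o|\geq\big(\tfrac{p}{(1+p)^{1+1/p}}\big)^n|K|$ valid for an arbitrary center of polarity $z$, which is slightly more information than the proposition itself requires.
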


\begin{proof}
Let $z\in intK^{o,p}$. From Lemma \ref{lem K-z integral volume fomula}, Lemma \ref{lem Ko-Kop-Ko} and applying Jensen's inequality to the convex
function $\varphi(x)=(1-\langle z,x\rangle)^{-(n+1)}$ on $K$, we get
\begin{align}\label{lem Ko-Kop-Ko 001}
|(K^{o,p}-z)^o|&=\int_{(K^{o,p})^o}\frac{dx}{(1-\langle z,x \rangle)^{n+1}}\nonumber\\
&\geq\int_{\frac{p}{(1+p)^{1+\frac{1}{p}}}K}\frac{dx}{(1-\langle z,x \rangle)^{n+1}}\nonumber\\
&\geq
\Big(\frac{p}{(1+p)^{1+\frac{1}{p}}}\Big)^n|K|\varphi\Big(\int_{\frac{p}{(1+p)^{1+\frac{1}{p}}}K}x\frac{dx}{|\frac{p}{(1+p)^{1+\frac{1}{p}}}K|}\Big)\nonumber\\
&=\Big(\frac{p}{(1+p)^{1+\frac{1}{p}}}\Big)^n|K|,
\end{align}
where the barycenter of $\Big(\frac{p}{(1+p)^{1+\frac{1}{p}}}K\Big)$ is $o$.

Let $z=San(K^{o,p})$ be the Santal\'{o} point $K^{o,p}$. Using (\ref{lem Ko-Kop-Ko 001}) and the classical Blaschke Santal\'{o} inequality,
we can obtain
\begin{align*}
|K||K^{o,p}|&\leq\Big(\frac{(1+p)^{1+\frac{1}{p}}}{p}\Big)^n|\big(K^{o,p}-San(K^{o,p})\big)^o||K^{o,p}|\\
&\leq \Big(\frac{(1+p)^{1+\frac{1}{p}}}{p}\Big)^n|B_2^n|^2,
\end{align*}
which shows that
$$\mathcal{M}_p(K)\leq n!\Big(\frac{(1+p)^{1+\frac{1}{p}}}{p}\Big)^n|B_2^n|^2.$$
\end{proof}

\section{The reverse Rogers-Shephard type inequality for p polar bodies}
In this section we will discuss reverse Rogers-Shephard type inequality for $L^p$-polar bodies.
We will need the following useful Lemmas.

\begin{lem}\label{lem-0 in K=0 in kop}\cite[Lemma 3.1]{Berndtsson-Mastrantonis-Rubinstein-2023}
 Let $p\in(0,\infty]$ and $K$ be a convex body. Then
 \begin{align*}
o\in int K \ \ \ \text{if\ and\ only\ if}\ \ \  o\in intK^{o,p}.
\end{align*}
\end{lem}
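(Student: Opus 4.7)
The strategy is to prove each implication by controlling the integral $\int_0^\infty r^{n-1}e^{-h_{p,K}(ry)}\,dr$ through two-sided estimates on $h_{p,K}(y)$. The crude pointwise bound $e^{p\langle x,y\rangle}\le e^{ph_K(y)}$ for $x\in K$ propagates through (\ref{p-support-function}) to give the easy inequality $h_{p,K}(y)\le h_K(y)$, and the sign/growth of $h_K$ on $S^{n-1}$ is exactly what decides whether $o$ lies in the interior of $K$.

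For the forward implication, assume $o\in\mathrm{int}\,K$. Since $K$ is a convex body it is bounded, so $K\subset RB_2^n$ for some $R>0$, and therefore $h_{p,K}(y)\le h_K(y)\le R|y|$. Substituting into (\ref{norm of polar body of K}),
\[
\|y\|_{K^{o,p}}^{-n}\;\ge\;\frac{1}{(n-1)!}\int_0^\infty r^{n-1}e^{-Rr|y|}\,dr\;=\;\frac{1}{(R|y|)^{n}},
\]
so $\|y\|_{K^{o,p}}\le R|y|$ for every $y$. Hence $\tfrac{1}{R}B_2^n\subset K^{o,p}$ and $o\in\mathrm{int}\,K^{o,p}$.

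For the reverse implication I argue by contrapositive. If $o\notin\mathrm{int}\,K$, hyperplane separation produces $y_0\in S^{n-1}$ with $\langle y_0,x\rangle\le 0$ for all $x\in K$, so $h_K(y_0)\le 0$. The integrand $e^{pr\langle x,y_0\rangle}$ is then bounded by $1$ on $K$ for every $r\ge 0$, hence $h_{p,K}(ry_0)\le 0$ and $e^{-h_{p,K}(ry_0)}\ge 1$. Plugging into (\ref{norm of polar body of K}) gives $\|y_0\|_{K^{o,p}}^{-n}\ge\frac{1}{(n-1)!}\int_0^\infty r^{n-1}\,dr=+\infty$, so $\|y_0\|_{K^{o,p}}=0$; by the $1$-homogeneity of $\|\cdot\|_{K^{o,p}}$, the whole ray $\{ty_0:t\ge 0\}$ lies in $K^{o,p}$. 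Thus $K^{o,p}$ is unbounded and cannot be a convex body with $o$ in its interior.

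The main subtlety is interpretive rather than computational: the forward direction genuinely produces an $n$-ball around $o$ inside $K^{o,p}$, whereas the reverse direction relies on reading ``$o\in\mathrm{int}\,K^{o,p}$'' in the convex-body sense, which implicitly includes boundedness. Once that convention is fixed, the ray argument closes the equivalence; otherwise one also needs a quantitative lower bound of the form $h_{p,K}(ry)\gtrsim r$ on $S^{n-1}$ (obtained from an inclusion $\rho B_2^n\subset K$ via (\ref{p-support-function})) to guarantee that $\|\cdot\|_{K^{o,p}}$ is bounded below on $S^{n-1}$.
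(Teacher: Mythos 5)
The paper itself offers no proof of this lemma (it is imported verbatim from \cite{Berndtsson-Mastrantonis-Rubinstein-2023}), so your argument has to stand on its own. Your reverse direction does: separation at $o\notin\mathrm{int}\,K$ yields $y_0\in S^{n-1}$ with $h_K(y_0)\le 0$, hence $h_{p,K}(ry_0)\le 0$, the integral in (\ref{norm of polar body of K}) diverges, $\|y_0\|_{K^{o,p}}=0$, and $K^{o,p}$ contains the ray $\{ty_0:t\ge0\}$, so it is not a convex body with $o$ in its interior.

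The forward direction, however, has a genuine gap which you half-notice but do not repair. Your estimate $\|y\|_{K^{o,p}}\le R|y|$ uses only $K\subset RB_2^n$ and never the hypothesis $o\in\mathrm{int}\,K$; consequently $\tfrac{1}{R}B_2^n\subset K^{o,p}$ holds for \emph{every} convex body, including ones with $o\notin K$ (take $K=[10,11]\subset\mathbb{R}$, $p=1$: one checks $K^{o,1}=(-\infty,y_+]$ with $y_+>0$, so $o$ lies in the topological interior although $o\notin K$). Thus under the purely topological reading the equivalence is false, and the only reading under which the lemma is true --- the convex-body reading that your own reverse direction requires --- obliges the forward direction to also prove that $K^{o,p}$ is bounded, i.e.\ that the integral in (\ref{norm of polar body of K}) is finite, uniformly over $y\in S^{n-1}$. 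That is exactly the estimate you relegate to a parenthetical ``otherwise'': from $\rho B_2^n\subset K$ the cap $\{x\in K:\langle x,y\rangle\ge\rho/2\}$ has volume at least some $c_\rho>0$ independent of $y\in S^{n-1}$, so by (\ref{p-support-function}) one gets $h_{p,K}(ry)\ge \rho r/2+\tfrac{1}{p}\log(c_\rho/|K|)$, the integral converges with a bound uniform on $S^{n-1}$, and hence $\|y\|_{K^{o,p}}\ge c|y|$ for some $c>0$, i.e.\ $K^{o,p}\subset c^{-1}B_2^n$. Note that you have the logic inverted: this lower bound is needed precisely \emph{under} the convex-body convention, not as an alternative to it. Once it is inserted into the forward direction, the proof is complete.
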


\begin{lem}\cite[Theorem 1.6.3]{schneider-2014}\label{lem-conv(KL)}
Let $K,L \subset \mathbb{R}^{n}$ be convex bodies with $o\in int(K\cap L)$. Then
$$\big(K^o\cap L^o\big)^o=conv(K\cup L).$$
\end{lem}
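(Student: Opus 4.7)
The plan is to reduce both sides to an application of the bipolar theorem. First, I would unwind the definition of polarity on the left-hand side: a point $y$ lies in $K^o\cap L^o$ exactly when $\langle y,x\rangle\le 1$ for every $x\in K$ and every $x\in L$, which is the same as saying $\langle y,x\rangle\le 1$ for every $x\in K\cup L$. Thus
\[
K^o\cap L^o \;=\; (K\cup L)^o.
\]

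Next, since the functional $x\mapsto \langle y,x\rangle$ is linear, its supremum over $K\cup L$ equals its supremum over $\operatorname{conv}(K\cup L)$. Consequently $(K\cup L)^o = (\operatorname{conv}(K\cup L))^o$, giving
\[
K^o\cap L^o \;=\; \bigl(\operatorname{conv}(K\cup L)\bigr)^o.
\]

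Now I would verify that $\operatorname{conv}(K\cup L)$ is a bona fide convex body containing the origin in its interior: it is convex by construction, compact as the convex hull of a compact set in $\mathbb{R}^n$, and the hypothesis $o\in\operatorname{int}(K\cap L)$ forces $o\in\operatorname{int}(\operatorname{conv}(K\cup L))$. This allows me to invoke the bipolar theorem (e.g.\ \cite[Theorem 1.6.1]{schneider-2014}), which yields $\bigl(C^o\bigr)^o=C$ for any convex body $C$ with $o\in\operatorname{int}C$. Taking polars of both sides of the preceding identity and applying this with $C=\operatorname{conv}(K\cup L)$ delivers
\[
\bigl(K^o\cap L^o\bigr)^o \;=\; \operatorname{conv}(K\cup L),
\]
as required.

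There is no serious obstacle here; the only points that require care are (i) noting that the conversion from $K\cup L$ to $\operatorname{conv}(K\cup L)$ inside the polar is free because support-type suprema are unchanged under convex hull, and (ii) checking the interior condition so that the bipolar identity can be applied without caveats. Both are immediate from the standing hypotheses.
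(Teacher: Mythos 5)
Your argument is correct and is the standard proof of this fact: the paper itself gives no proof, citing it directly from Schneider's book, and the identities $K^o\cap L^o=(K\cup L)^o=\bigl(\operatorname{conv}(K\cup L)\bigr)^o$ followed by the bipolar theorem are exactly how the result is established there. The two points you flag for care (invariance of the polar under convex hull, and the interior condition needed for $(C^o)^o=C$) are indeed the only ones requiring verification, and you handle both correctly.
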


\begin{lem}\label{lem-legendre transform}
Let $p\in(0,+\infty]$ and $K,L \subset \mathbb{R}^{n}$ be convex bodies with $o\in int(K\cap L)$.
Then, for any $z\in\mathbb{R}^n$,
$$\inf_{z=x+y}(\|x\|_{K^{o,p}}+\|y\|_{K^{o,p}})=\|z\|_{conv(K^{o,p}\cup L^{o,p})}.$$
\end{lem}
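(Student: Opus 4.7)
The statement is the classical identity that the Minkowski gauge of the convex hull of two bodies containing the origin in their interior coincides with the infimal convolution of the individual gauges (the second norm on the left should evidently read $\|y\|_{L^{o,p}}$, since otherwise $L$ plays no role). My plan is to prove the two inequalities separately, working directly from the representation
\[
\mathrm{conv}(K^{o,p}\cup L^{o,p})=\{\alpha a+(1-\alpha)b:\,a\in K^{o,p},\,b\in L^{o,p},\,\alpha\in[0,1]\}.
\]

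Before doing anything else, I would note that all three gauges are finite: because $o\in \mathrm{int}(K\cap L)$, Lemma \ref{lem-0 in K=0 in kop} gives $o\in\mathrm{int}\,K^{o,p}\cap \mathrm{int}\,L^{o,p}$, whence also $o\in\mathrm{int}\,\mathrm{conv}(K^{o,p}\cup L^{o,p})$. Thus $\|\cdot\|_{K^{o,p}}$, $\|\cdot\|_{L^{o,p}}$ and $\|\cdot\|_{\mathrm{conv}(K^{o,p}\cup L^{o,p})}$ are well-defined positively homogeneous functionals whose unit balls are the indicated bodies.

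For the inequality ``$\leq$'' in the lemma, I would take any decomposition $z=x+y$ with $x,y\ne o$, set $\lambda:=\|x\|_{K^{o,p}}+\|y\|_{L^{o,p}}$, and write $a:=x/\|x\|_{K^{o,p}}\in K^{o,p}$, $b:=y/\|y\|_{L^{o,p}}\in L^{o,p}$. Then
\[
z=\lambda\Bigl(\tfrac{\|x\|_{K^{o,p}}}{\lambda}\,a+\tfrac{\|y\|_{L^{o,p}}}{\lambda}\,b\Bigr)\in\lambda\,\mathrm{conv}(K^{o,p}\cup L^{o,p}),
\]
so $\|z\|_{\mathrm{conv}(K^{o,p}\cup L^{o,p})}\le\lambda$. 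Taking infimum over all decompositions yields the inequality; the degenerate cases $x=o$ or $y=o$ follow at once from $K^{o,p},L^{o,p}\subseteq \mathrm{conv}(K^{o,p}\cup L^{o,p})$.

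For the reverse inequality ``$\geq$'', I would pick any $\lambda>\|z\|_{\mathrm{conv}(K^{o,p}\cup L^{o,p})}$ and use the representation of the convex hull above to write $z=\lambda(\alpha a+(1-\alpha)b)$ with $a\in K^{o,p}$, $b\in L^{o,p}$, $\alpha\in[0,1]$. Setting $x:=\lambda\alpha a$, $y:=\lambda(1-\alpha)b$ gives $z=x+y$ with $\|x\|_{K^{o,p}}+\|y\|_{L^{o,p}}\le\lambda\alpha+\lambda(1-\alpha)=\lambda$. Letting $\lambda\searrow\|z\|_{\mathrm{conv}(K^{o,p}\cup L^{o,p})}$ completes the proof. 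There is no genuine obstacle here — the argument is pure bookkeeping with the Minkowski functional; the only care needed is to check that all gauges are finite, which Lemma \ref{lem-0 in K=0 in kop} supplies.
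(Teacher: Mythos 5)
Your proof is correct, and you are right that the second norm in the statement is a typo for $\|y\|_{L^{o,p}}$ (the paper's own proof indeed works with $\|x\|_K+\|y\|_L$ and uses $L$ throughout). However, your route is genuinely different from the paper's. The paper proves the identity by Legendre duality: using $\|x\|_K=h_{K^o}(x)$ it computes
\[
\mathcal{L}\Bigl(\inf_{\cdot=x+y}(\|x\|_K+\|y\|_L)\Bigr)=I_{K^o}^\infty+I_{L^o}^\infty,
\]
then applies $\mathcal{L}$ a second time to recover the infimal convolution as $h_{K^o\cap L^o}=\|\cdot\|_{(K^o\cap L^o)^o}$, and finally invokes Lemma \ref{lem-conv(KL)} to identify $(K^o\cap L^o)^o$ with $\mathrm{conv}(K\cup L)$; the result is then applied to $K^{o,p}$ and $L^{o,p}$ via Lemma \ref{lem-0 in K=0 in kop}. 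Your argument is a direct two-inequality computation with the Minkowski functional and the representation of $\mathrm{conv}(K^{o,p}\cup L^{o,p})$ as two-term convex combinations; it needs no duality machinery and, in particular, silently avoids the one delicate point in the paper's approach, namely that the biduality $\mathcal{L}\mathcal{L}f=f$ requires the infimal convolution to be convex and lower semicontinuous (true here, but unverified in the paper). What the paper's approach buys in exchange is a clean functional-analytic identity ($\mathcal{L}$ turns infimal convolution into addition) that places the lemma in the same Legendre-transform framework used elsewhere in \cite{Berndtsson-Mastrantonis-Rubinstein-2023}. Both proofs rest on the same preliminary facts: $o\in\mathrm{int}\,K^{o,p}\cap\mathrm{int}\,L^{o,p}$ from Lemma \ref{lem-0 in K=0 in kop}, and the positive homogeneity of $\|\cdot\|_{K^{o,p}}$, which follows from its definition in \eqref{norm of polar body of K} by the substitution $r\mapsto r/\lambda$.
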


\begin{proof}
For any convex function $f$ on $\mathbb{R}^n$, its \textit{Legendre transform} $\mathcal{L}f$ is defined by
$$\mathcal{L}f(x)=\sup_{y\in\mathbb{R}^n}\{\langle x,y \rangle-f(y)\}.$$

In fact, for any convex body $K,L\subset\mathbb{R}^n$ with $o\in int(K\cap L)$, we have
\begin{align}\label{lem-legendre transform001}
&\mathcal{L}\Big(\inf_{\cdot=x+y}(\|x\|_K+\|y\|_L)\Big)(z)\notag\\
=&\sup_{w\in\mathbb{R}^n}\Big(\langle w,z\rangle-\inf_{w=x+y}(\|x\|_K+\|y\|_L)\Big)\notag\\
=&\sup_{w\in\mathbb{R}^n}\Big(\langle w,z\rangle-\inf_{w=x+y}(h_{K^o}(x)+h_{L^o}(y))\Big)\notag\\
=&\sup_{w\in\mathbb{R}^n}\sup_{w=x+y}\Big(\langle x,z\rangle-h_{K^o}(x)+\langle y,z\rangle-h_{L^o}(y)\Big)\notag\\
=&I_{K^o}^\infty(z)+I_{L^o}^\infty(z).
\end{align}

Hence, from Lemma \ref{lem-conv(KL)},
\begin{align}\label{lem-legendre transform002}
&\inf_{z=x+y}(\|x\|_K+\|y\|_L)\notag\\
=&\mathcal{L}\Big(I_{K^o}^\infty(\cdot)+I_{L^o}^\infty(\cdot)\Big)(z)\notag\\
=&\sup_{w\in\mathbb{R}^n}\Big(\langle w,z\rangle-I_{K^o}^\infty(w)-I_{L^o}^\infty(w)\Big)\notag\\
=&h_{K^o\cap L^o}(z)=\|z\|_{\big(K^o\cap L^o\big)^o}\notag\\
=&\|z\|_{conv(K\cup L)}.
\end{align}

The conclusion follows from Lemma \ref{lem-0 in K=0 in kop} and (\ref{lem-legendre transform002}).
\end{proof}

\begin{lem}\label{lem-inequality of log concave function}\cite{milman-2000}
Let $\mu$ be a probability measure on $\mathbb{R}^n$ and let $\phi: \mathbb{R}^n\rightarrow\mathbb{R}$
be a non-negative log-concave function with finite, positive integral. Then
$$\int_{\mathbb{R}^n}\phi(x)d\mu(x)\leq\Big(\int_{\mathbb{R}^n}x\frac{\phi(x)}{\int_{\mathbb{R}^n}\phi(y)d\mu(y)}d\mu(x)\Big).$$
\end{lem}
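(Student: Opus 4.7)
I read the lemma as asserting $\int \phi\,d\mu \le \phi(x_0)$, where
$$x_0 \;:=\; \frac{\int_{\mathbb{R}^n} x\,\phi(x)\,d\mu(x)}{\int_{\mathbb{R}^n}\phi(y)\,d\mu(y)}$$
is the barycenter of the probability measure $\phi\,d\mu / \!\int\phi\,d\mu$. The plan is to run the classical supporting-hyperplane argument for the concave function $\log\phi$ at $x_0$, then close the estimate with the elementary convexity bound $e^{-t} \ge 1 - t$. Since $\phi \ge 0$ is log-concave with finite, positive integral, its positivity set is convex; because $x_0$ is a $\phi d\mu$-convex combination of points in that set, $x_0$ lies in it and $\phi(x_0) > 0$, so that $\log\phi$ makes sense at $x_0$.

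The concavity of $\log\phi$ then produces a subgradient $v \in \mathbb{R}^n$ such that, for every $x$ in the support of $\phi$,
$$\log\phi(x) \;\le\; \log\phi(x_0) + \langle v,\, x - x_0 \rangle, \qquad\text{equivalently}\qquad \phi(x)\, e^{-\langle v,\, x - x_0\rangle} \;\le\; \phi(x_0).$$
Integrating this pointwise bound against the probability measure $\mu$ gives
$$\int_{\mathbb{R}^n} \phi(x)\, e^{-\langle v,\, x - x_0\rangle}\, d\mu(x) \;\le\; \phi(x_0).$$

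To finish, I would combine the above with the universal inequality $e^{-t} \ge 1 - t$: the integrand is bounded below by $\phi(x)\bigl(1 - \langle v,\, x - x_0\rangle\bigr)$, and the very choice of $x_0$ as the $\phi\mu$-barycenter forces $\int \phi(x)\,\langle v,\, x - x_0\rangle\,d\mu(x) = 0$. Chaining the two inequalities gives the required
$$\int_{\mathbb{R}^n} \phi\,d\mu \;\le\; \int_{\mathbb{R}^n} \phi(x)\, e^{-\langle v,\, x - x_0\rangle}\, d\mu(x) \;\le\; \phi(x_0).$$
The main obstacle I anticipate is purely a regularity issue: if $x_0$ sits on the relative boundary of the support of $\phi$, the subgradient of $\log\phi$ could be vertical and $v$ need not exist in $\mathbb{R}^n$. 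I would deal with this by restricting the argument to the relative interior of the positivity set of $\phi$ (in which $x_0$ actually lies in all nondegenerate cases), or, if a completely clean treatment is desired, by replacing $\phi$ with a strictly log-concave regularisation and passing to the limit.
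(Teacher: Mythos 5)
Your argument is correct, and in fact the paper offers no proof of this lemma at all: it is quoted as a known result of Milman and Pajor \cite{milman-2000}, so there is nothing internal to compare against. Your proof is the standard one for that cited result. Two remarks. First, you were right to re-read the statement: as printed in the paper the right-hand side is missing the outer evaluation of $\phi$, i.e.\ it should read $\phi\bigl(\int x\,\phi(x)\,d\mu(x)/\int\phi\,d\mu\bigr)$; your reconstruction $\int\phi\,d\mu\le\phi(x_0)$ with $x_0$ the $\phi\mu$-barycenter is the intended (and the only dimensionally sensible) assertion, and it is exactly what the paper uses later in the proof of Theorem 3.8, where $\phi_t$ is the log-concave section function and the inequality is applied at the barycenter. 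Second, your chain $\int\phi\,d\mu\le\int\phi(x)e^{-\langle v,x-x_0\rangle}d\mu(x)\le\phi(x_0)$, combining the supergradient inequality for $\log\phi$ at $x_0$ with $e^{-t}\ge 1-t$ and the vanishing of $\int\phi(x)\langle v,x-x_0\rangle\,d\mu(x)$ by the choice of $x_0$, is complete; the only genuine technical point is the existence of the supergradient $v$, and your observation that $x_0$ lies in the relative interior of the convex hull of the support of $\phi\,d\mu$ (so that one may work in the affine hull of that set, where a concave function always admits a supergradient at a relative interior point) disposes of it. No gap.
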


\begin{lem}\label{Lem-norm of K cha cheng L}
Let $K,L \subset \mathbb{R}^{n}$ be convex bodies. Then, for any $x,y\in\mathbb{R}^n$,
$$\max\{\|x\|_K, \|y\|_L\}=\|(x,y)\|_{K\times L}.$$
\end{lem}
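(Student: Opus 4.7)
The plan is to unfold the definition of the Minkowski gauge and use the product structure of $K\times L\subset \mathbb{R}^{n}\times\mathbb{R}^{n}$. Recall that for a convex body $M$ containing the origin in its interior, the norm (gauge) $\|z\|_{M}=\inf\{\lambda>0:z\in\lambda M\}$. So the identity to prove is just the compatibility of this infimum with the Cartesian product.

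First, I would rewrite the right-hand side as
$$\|(x,y)\|_{K\times L}=\inf\{\lambda>0:(x,y)\in\lambda(K\times L)\}.$$
Next, I would use the elementary observation that $\lambda(K\times L)=(\lambda K)\times(\lambda L)$, so $(x,y)\in\lambda(K\times L)$ if and only if $x\in\lambda K$ and $y\in\lambda L$ simultaneously. In gauge-functional language, these two conditions are exactly $\lambda\ge \|x\|_{K}$ and $\lambda\ge \|y\|_{L}$, which can be combined into the single inequality $\lambda\ge \max\{\|x\|_{K},\|y\|_{L}\}$.

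Finally, taking the infimum over all admissible $\lambda$ yields $\|(x,y)\|_{K\times L}=\max\{\|x\|_{K},\|y\|_{L}\}$, which is the claimed equality. There is no real obstacle here: the statement is essentially a tautology once one writes the Minkowski gauge as an infimum, and the proof is a three-line manipulation. The reason to state it as a separate lemma is presumably that it will be plugged into the reverse Rogers--Shephard argument of Theorem \ref{thm-reverse--R-S-type-inequality section1}, where passing between a pair $(x,y)$ and the product body $K\times L$ allows one to apply Lemma \ref{lem-inequality of log concave function} or a similar one-variable tool in $\mathbb{R}^{2n}$.
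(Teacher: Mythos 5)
Your proof is correct and follows essentially the same route as the paper: both unfold the Minkowski gauge as an infimum, use $\lambda(K\times L)=(\lambda K)\times(\lambda L)$ to split the membership condition, and pass the infimum through the maximum. The paper's version is just a more compressed three-line display of the same computation.
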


In fact, from the definition of norm, we have
\begin{align*}
\|(x,y)\|_{K\times L}
=&\inf\{\lambda:(x,y)\in\lambda(K\times L)\}\notag\\
=&\max\{\inf\{\lambda:x\in\lambda K\}, \inf\{\lambda:y\in\lambda L\}\}\notag\\
=&\max\{\|x\|_K, \|y\|_L\}.
\end{align*}

Let $K,L \subset \mathbb{R}^{n}$ be convex bodies with $o\in int(K\cap L)$. For simplification, we introduce the following notations.
\begin{align}\label{f(x)--g(x)}
f(x)=e^{-\|x\|_{K^{o,p}}}\ \ \ \text{and}\ \ \ g(y)=e^{-\|y\|_{L^{o,p}}}.
\end{align}

\begin{align}\label{Ct}
C_{t}=\{(x,y)\in \mathbb{R}^{2n}:f(x)g(-y)\geq t\}
\end{align}

\begin{align}\label{--Ct}
\tilde{C}_{t}=\{(u,v)\in \mathbb{R}^{2n}:f(\frac{u+v}{\sqrt{2}})g(-\frac{u-v}{\sqrt{2}})\geq t\}.
\end{align}

Let $M_t$ be the projection of $\tilde{C}_t$ onto the subspace
$$H=span\{e_{n+1},\cdots,e_{2n}\}=\mathbb{R}^n,$$
thus
\begin{align}\label{Mt}
M_{t}=\{(o,v)\in \mathbb{R}^{2n}:\max_{u\in\mathbb{R}^n} f(\frac{u+v}{\sqrt{2}})g(-\frac{u-v}{\sqrt{2}})\geq t\}.
\end{align}

\begin{thm}\label{thm-reverse--R-S-type-inequality}
Let $p\in(0,+\infty]$ and $K,L \subset \mathbb{R}^{n}$ be convex bodies with $o\in int K\cap int L$ such that $K^{o,p}$ and $L^{o,p}$ have opposite barycenters.Then
$$C^{n}_{2n}\Big|conv\Big(K^{o,p}\cup L^{o,p}\Big)\Big|\cdot\Big|\Big((K^{o,p})^{o}-(L^{o,p})^{o}\Big)^o\Big|\geq |K^{o,p}||L^{o,p}|,$$
where $C^{n}_{2n}=\frac{(2n)!}{(n!)^2}$ is the combination number formula.
\end{thm}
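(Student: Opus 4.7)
The plan is to analyse the log-concave function $F(x,y):=f(x)g(-y)$ on $\mathbb{R}^{2n}$, where $P:=K^{o,p}$, $Q:=L^{o,p}$, and $f,g$ are as in \eqref{f(x)--g(x)}. Since $\int_{\mathbb{R}^n}e^{-\|z\|_K}\,dz=n!\,|K|$ for every convex body $K$ with $o\in int\,K$, Fubini and the layer-cake formula give
\[
(n!)^2|P||Q|=\int_{\mathbb{R}^{2n}}F(x,y)\,dx\,dy=\int_0^1|C_t|\,dt=\int_0^1|\tilde C_t|\,dt,
\]
the last equality because $(x,y)\mapsto\bigl((x+y)/\sqrt 2,(x-y)/\sqrt 2\bigr)$ is an orthogonal transformation carrying $C_t$ onto $\tilde C_t$. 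To read off $|M_t|$, substitute $a:=(u+v)/\sqrt 2$ and $b:=-(u-v)/\sqrt 2$: then $a+b=\sqrt 2\,v$ and $u$ sweeps out every such decomposition, so by Lemma \ref{lem-legendre transform}
\[
\max_u f\bigl(\tfrac{u+v}{\sqrt 2}\bigr)g\bigl(-\tfrac{u-v}{\sqrt 2}\bigr)=e^{-\min_{a+b=\sqrt 2\,v}(\|a\|_P+\|b\|_Q)}=e^{-\sqrt 2\,\|v\|_{conv(P\cup Q)}},
\]
whence $|M_t|=\bigl(-\log t/\sqrt 2\bigr)^n|conv(P\cup Q)|$ for $t\in(0,1]$. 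For the central $u$-section, the identity $\|u\|_P+\|u\|_{-Q}=h_{P^o+(-Q)^o}(u)=h_{P^o-Q^o}(u)$ shows $\tilde C_t\cap H_u=-\sqrt 2(\log t)(P^o-Q^o)^o$, hence $|\tilde C_t\cap H_u|=(-\sqrt 2\log t)^n|(P^o-Q^o)^o|$. The factors of $\sqrt 2$ cancel in the product.

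The crux of the proof is the reverse section/projection inequality $|\tilde C_t|\leq|M_t|\cdot|\tilde C_t\cap H_u|$, and this is where the opposite-barycenter assumption enters. The slice $\phi(v):=|\{u:(u,v)\in\tilde C_t\}|$ is log-concave on $M_t$ by Brunn's principle, so Lemma \ref{lem-inequality of log concave function} applied with $\mu=dv/|M_t|$ yields
\[
\frac{|\tilde C_t|}{|M_t|}=\int_{M_t}\phi(v)\,\frac{dv}{|M_t|}\leq\phi(\bar v),\qquad \bar v=\frac{\int v\,\phi(v)\,dv}{\int\phi(v)\,dv}=bar_v(\tilde C_t).
\]
I expect the main obstacle to be verifying $\bar v=0$. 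For this I would compute the centroid of $C_t$ in the $(x,y)$ frame via the polar-coordinate identity
\[
\int_{\mathbb{R}^n} x\bigl(r-\|x\|_P\bigr)^n_+\,dx=\frac{(n+1)(n!)^2}{(2n+1)!}\,r^{2n+1}|P|\,bar(P),
\]
which follows from $\int_P z\,dz=|P|\,bar(P)$ and $B(n+1,n+1)=(n!)^2/(2n+1)!$, together with its $y\mapsto-y$ analogue; these give $bar_x(C_t)\propto bar(P)$ and $bar_y(C_t)\propto -bar(Q)$ with identical proportionality constants, so the hypothesis $bar(P)=-bar(Q)$ forces $bar_x(C_t)=bar_y(C_t)$, and the $45^\circ$ rotation produces $bar_v(\tilde C_t)=(bar_x-bar_y)/\sqrt 2=0$. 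With $\bar v=0$ we get $\phi(\bar v)=|\tilde C_t\cap H_u|$, proving $|\tilde C_t|\leq|M_t|\cdot|\tilde C_t\cap H_u|$. Integrating over $t\in(0,1]$ and using $\int_0^1(-\log t)^{2n}\,dt=(2n)!$ yields $(n!)^2|P||Q|\leq(2n)!\,|conv(P\cup Q)|\cdot|(P^o-Q^o)^o|$, which is the stated inequality with constant $\binom{2n}{n}=(2n)!/(n!)^2$.
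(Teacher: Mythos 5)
Your proposal is correct and follows essentially the same route as the paper: the same bodies $C_t$, $\tilde C_t$, $M_t$, the same application of Lemma \ref{lem-inequality of log concave function} to the slice function of $\tilde C_t$, the same barycenter computation (via $\int x\,(r-\|x\|)_+^n\,dx$) to show the weighted centroid of the $v$-marginal vanishes under the opposite-barycenter hypothesis, and the same identification of $M_t$ and $\tilde C_t\cap H^\perp$ through Lemma \ref{lem-legendre transform} and $h_{P^o-Q^o}$. The only (cosmetic) differences are that you evaluate $|M_t|$ and the central section explicitly as dilates and integrate $(-\log t)^{2n}$ directly, where the paper instead runs the final step through Fubini and Lemma \ref{Lem-norm of K cha cheng L}, and your sign bookkeeping for $\int_{C_t}y\,dx\,dy$ (proportional to $-bar(L^{o,p})$) is the consistent one.
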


\begin{proof}
From the definitions (\ref{--Ct}) and (\ref{Mt}), the volume of $\tilde{C}_t$ can be expressed as an integral,
\begin{align}\label{volume --Ct001}
|\tilde{C}_t|=\int_{M_t}\int_{\tilde{C}_t\cap\big((0,v)+H^{\perp}\big)}dudv=\int_{M_t}\phi_t(v)dv,
\end{align}
where $\phi_t(v)=|\tilde{C}_t\cap\big((o,v)+H^{\perp}\big)|$ is the volume of the section $\tilde{C}_t\cap\big((o,v)+H^{\perp}\big)$.

Since $\phi_t(v)$ is a log-concave function and has support on $M_t$, from (\ref{volume --Ct001}) Lemma \ref{lem-inequality of log concave function}, we have

\begin{align}\label{volume inequality --Ct001}
|\tilde{C}_t|=&|M_t|\int_{M_t}\phi_t(v)\frac{dv}{|M_t|}
\leq|M_t|\phi_t\Big(\frac{\int_{M_t}v\phi_t(v)\frac{dv}{|M_t|}}{\int_{M_t}\phi_t(v)\frac{dv}{|M_t|}}\Big)\notag\\
=&|M_t|\phi_t\Big(\frac{1}{|\tilde{C}_t|}\int_{M_t}\int_{\tilde{C}_t\cap\big((0,v)+H^{\perp}\big)}vdudv\Big)\notag\\
=&|M_t|\phi_t\Big(\frac{1}{|\tilde{C}_t|}\int_{\tilde{C}_t}vdudv\Big).
\end{align}

By using variable substitution $u=\frac{x+y}{\sqrt{2}}$, $v=\frac{x-y}{\sqrt{2}}$, we have
\begin{align}\label{volume inequality --Ct002}
\int_{\tilde{C}_t}vdudv
=&\frac{1}{\sqrt{2}}\int_{\{(x,y):f(x)g(-y)\geq t\}}x-ydxdy\notag\\
=&\frac{1}{\sqrt{2}}\Big(\int_{C_t}xdxdy-\int_{C_t}ydxdy\Big).
\end{align}

Since $\|f\|_\infty=\|g\|_\infty=1$, we have
\begin{align}\label{volume inequality --Ct003}
\int_{C_t}xdxdy=&\int_{\{(x,y):f(x)g(-y)\geq t\}}xdydx\notag\\
=&\int_{\{x:f(x)\geq t\}}\int_{\{y:g(-y)\geq \frac{t}{f(x)}\}}xdydx\notag\\
=&\int_{\{x:\|x\|_{K^{o,p}}\leq -\log t\}}\int_{\{y:\|-y\|_{L^{o,p}}\leq -\log t-\|x\|_{K^{o,p}}\}}xdydx\notag\\
=&|-L^{o,p}|\int_{(-\log t)K^{o,p}}x(-\log t-\|x\|_{K^{o,p}})^ndx\notag\\
=&|L^{o,p}|(-\log t)^{2n+1}\int_{K^{o,p}}z(1-\|z\|_{K^{o,p}})^ndz\notag\\
=&(-\log t)^{2n+1}|L^{o,p}|\int_{K^{o,p}}z\int_{\|z\|_{K^{o,p}}}^1n(1-s)^ndsdz\notag\\
=&(-\log t)^{2n+1}|L^{o,p}|n\int_0^1(1-s)^n\int_{sK^{o,p}}zdzds\notag\\
=&(-\log t)^{2n+1}|K^{o,p}||L^{o,p}|n\int_0^1(1-s)^ns^{n+1}ds\int_{K^{o,p}}z\frac{dz}{|K^{o,p}|}\notag\\
=&n(-\log t)^{2n+1}\beta(n,n+2)|K^{o,p}||L^{o,p}|\cdot bar(K^{o,p}),
\end{align}
where $\beta(n,n+2)$ is Euler integral. In the similar way, we have

\begin{align}\label{volume inequality --Ct004}
\int_{C_t}ydxdy=n(-\log t)^{2n+1}\beta(n,n+2)|K^{o,p}||L^{o,p}|\cdot bar(L^{o,p}).
\end{align}

From (\ref{volume inequality --Ct001}), (\ref{volume inequality --Ct002}), (\ref{volume inequality --Ct003}), (\ref{volume inequality --Ct004}) and the fact that $bar(K^{o,p})=-bar(L^{o,p})$, we get
\begin{align}\label{volume inequality --Ct005}
|\tilde{C}_t|\leq |M_t|\phi_t(o)= |M_t|\cdot|\tilde{C}_t\cap H^\perp|.
\end{align}

On the one hand,
\begin{align}\label{integral 0-1--Ct-one hand}
\int_0^1\tilde{C}_{t}dt
=&\int_0^1\int_{\{(u,v)\in \mathbb{R}^{2n}:f(\frac{u+v}{\sqrt{2}})g(-\frac{u-v}{\sqrt{2}})\geq t\}}dudvdt\notag\\
=&\int_{\mathbb{R}^{2n}}f(\frac{u+v}{\sqrt{2}})g(-\frac{u-v}{\sqrt{2}})dudv\notag\\
=&\int_{\mathbb{R}^{2n}}f(x)g(-y)dxdy\notag\\
=&\int_{\mathbb{R}^{2n}}e^{-\|x\|_{K^{o,p}}}e^{-\|y\|_{L^{o,p}}}dxdy\notag\\
=&(n!)^2|K^{o,p}||L^{o,p}|.
\end{align}

On the other hand, from (\ref{volume inequality --Ct005}), Lemma \ref{lem-legendre transform} and Lemma \ref{Lem-norm of K cha cheng L}, we have
\begin{align}\label{integral 0-1--Ct-other hand}
&\int_0^1\tilde{C}_{t}dt
\leq\int_0^1|M_t|\cdot|\tilde{C}_t\cap H^\perp|dt\notag\\
=&\int_0^1|\{(o,v)\in \mathbb{R}^{2n}:\max_{\bar{u}\in\mathbb{R}^n} f(\frac{\bar{u}+v}{\sqrt{2}})g(-\frac{\bar{u}-v}{\sqrt{2}})\geq t\}|\notag\\
&\cdot|\{u\in \mathbb{R}^n: f(\frac{u}{\sqrt{2}})g(-\frac{u}{\sqrt{2}})\geq t\}|dt\notag\\
=&\int_{\mathbb{R}^{2n}}\int_0^{\min\Big\{\max_{\bar{u}\in\mathbb{R}^n} f(\frac{\bar{u}+v}{\sqrt{2}})g(-\frac{\bar{u}-v}{\sqrt{2}}), f(\frac{u}{\sqrt{2}})g(-\frac{u}{\sqrt{2}})\Big\}}dtdudv\notag\\
=&\int_{\mathbb{R}^{2n}}\min\{\max_{\tilde{u}\in\mathbb{R}^n}e^{-\|\frac{\tilde{u}+v}{\sqrt{2}}\|_{K^{o,p}}-\|\frac{v-\tilde{u}}{\sqrt{2}}\|_{L^{o,p}}},
e^{-\|\frac{u}{\sqrt{2}}\|_{K^{o,p}}-\|\frac{-u}{\sqrt{2}}\|_{L^{o,p}}}\}dudv\notag\\
=&\int_{\mathbb{R}^{2n}}\min\{e^{-\|\sqrt{2}v\|_{conv(K^{o,p}\cup L^{o,p})}},
e^{-h_{(K^{o,p})^{o}-(L^{o,p})^{o}}(\frac{u}{\sqrt{2}})}dudv\notag\\
=&\int_{\mathbb{R}^{2n}}e^{-\max\{\|v\|_{conv(K^{o,p}\cup L^{o,p})},\|u\|_{((K^{o,p})^{o}-(L^{o,p})^{o})^o}\}}dudv\notag\\
=&\int_{\mathbb{R}^{2n}}e^{-\|(v,u)\|_{conv(K^{o,p}\cup L^{o,p})\times((K^{o,p})^{o}-(L^{o,p})^{o})^o}}dudv\notag\\
=&(2n)!|conv(K^{o,p}\cup L^{o,p})\times((K^{o,p})^{o}-(L^{o,p})^{o})^o|\notag\\
=&(2n)!|conv(K^{o,p}\cup L^{o,p})|\cdot|((K^{o,p})^{o}-(L^{o,p})^{o})^o|.
\end{align}

The conclusion follows from (\ref{integral 0-1--Ct-one hand}) and  (\ref{integral 0-1--Ct-other hand}).
\end{proof}

\begin{rem}
Using the following inequality due to Rogers and Shephard \cite[P273, Theorem 1]{rogers-1958} instead of (\ref{volume inequality --Ct005}),
\begin{align*}
|\tilde{C}_t|\geq \frac{(n!)^2}{(2n)!}|M_t|\cdot|\tilde{C}_t\cap H^\perp|,
\end{align*}
we can obtain the reverse of Theorem \ref{thm-reverse--R-S-type-inequality} as follows,
$$\Big|conv\Big(K^{o,p}\cup L^{o,p}\Big)\Big|\cdot\Big|\Big((K^{o,p})^{o}-(L^{o,p})^{o}\Big)^o\Big|\leq |K^{o,p}||L^{o,p}|.$$

\end{rem}

\bibliographystyle{Plain}

\end{document}